\newtheorem{thm}{Theorem}[section]
\newtheorem{cor}[thm]{Corollary}
\newtheorem{lem}[thm]{Lemma}
\newtheorem{prop}[thm]{Proposition}
\theoremstyle{definition}
\newtheorem{defn}[thm]{Definition}
\theoremstyle{remark}
\newtheorem{rem}[thm]{Remark}
\numberwithin{equation}{section}
\numberwithin{equation}{section}
\begin{document}

\title[Representations of Clifford Algebras]{On Representations of Clifford Algebras of Ternary Cubic Forms}


\author[Coskun]{Emre Coskun}
\address{Department of Mathematics, University of Western Ontario, London, ON, N6A 5B7 CANADA}
\email{ecoskun@uwo.ca}
\author[Kulkarni]{Rajesh S. Kulkarni}
\address{Department of Mathematics, Michigan State University, East Lansing, MI 48824}
\email{kulkarni@math.msu.edu}
\author[Mustopa]{Yusuf Mustopa}
\address{Department of Mathematics, University of Michigan, Ann Arbor, MI 48109}
\email{ymustopa@umich.edu}




\date{\today}

\begin{abstract}
In this article, we provide an overview of a one-to-one correspondence between representations of the generalized Clifford algebra $C_f$ of a ternary cubic form $f$ and certain vector bundles (called Ulrich bundles) on a cubic surface $X$. We study general properties of Ulrich bundles, and using a recent classification of Casanellas and Hartshorne, deduce the existence of irreducible representations of $C_f$ of every possible dimension.
\end{abstract}

\maketitle

\section{Introduction}

This article concerns irreducible representations of Clifford algebras, which form a natural generalization of classical Clifford algebras of quadratic forms (see Section \ref{gca} for the definition and basic properties).  These algebras are universal for linearizing forms of degree $\geq 2,$ and they have been of interest in recent years, having been studied in papers such as \cite{Cos, HT, Kul, vdB}.

The dimension of any matrix representation of a Clifford algebra associated to a nondegenerate homogeneous form of degree $d$ is necessarily divisible by $d$ (e.g. Proposition \ref{divbyd}).  The first main question is whether for a given form of degree $d$ and a positive integer $r$ there is a matrix representation of dimension $dr.$  If such representations exist, then a result of C.~Procesi (Theorem 1.8, Ch. 4 of \cite{Pro}) implies that they are parametrized by a fine moduli space.

A geometric handle on these moduli spaces may be obtained as follows.  Let $f=f(x_1, \cdots ,x_n)$ be a nondegenerate homogeneous form of degree $d,$ and let $X_f$ be the smooth hypersurface in $\mathbb{P}^{n}={\rm Proj}\;k[w,x_1, \cdots ,x_n]$ given by the equation $w^d - f(x_{1}, \cdots , x_{n})$.  A theorem of M.~Van den Bergh (\cite{vdB}) says that equivalence classes of $dr$-dimensional representations of the Clifford algebra $C_f$ of $f$ are in one-to-one correspondence with isomorphism classes of rank-$r$ vector bundles on $X_f$ whose direct images under the projection map to $\mathbb{P}^{n-1}$ (forgetting the $w$ coordinate) are the trivial vector bundle of rank $dr$ on $\mathbb{P}^{n-1}.$  Such vector bundles have been studied in a variety of algebraic and algebro-geometric contexts (e.g. \cite{BHU, BHS, CH, ESW, MP}) and they are known as \textit{Ulrich bundles.}  We show that under van den Bergh's correspondence, irreducible representations of $C_f$ correspond precisely to stable Ulrich bundles on $X_f$ (Proposition \ref{lem-irred-stable}).

So far, the work on representations of Clifford algebras has been focused on binary forms of arbitrary degree. In \cite{Kul}, it was shown that the moduli space of $d$-dimensional representations of a binary form of degree $d$ can be described as the complement of a theta divisor on the Picard variety of degree $d$ invertible sheaves of a smooth curve associated with the form. In \cite{Cos}, these results were extended to $rd$-dimensional representations for $r > 1$. In this case, the corresponding moduli space is isomorphic to an open subscheme of the (coarse) moduli space of rank $r$ semistable vector bundles on a smooth projective curve associated with the binary form.  Since the moduli spaces in the binary case are now completely understood, and the case of quadratic forms (in any number of variables) is classical, the next natural case to consider is that of ternary cubic forms.

Recently, in \cite{CH} M.~Casanellas and R.~Hartshorne completely classified families of stable Ulrich bundles on cubic surfaces, and showed that stable Ulrich bundles of rank $r$ exist for each $r \geq 1$.  Combining their results with Proposition \ref{lem-irred-stable}, we are easily able to deduce the existence of families of irreducible representations of $C_f$ having dimension $3r$ for each $r \geq 1$ (Corollary \ref{cubicreps}).

One curious fact is that the moduli space of $3$-dimensional representations is a zero-dimensional scheme supported on 72 points (see Corollary \ref{3dimreps}).  The Azumaya algebra which is the global object representing the functor is a quotient of $C_f$ in this case. It appears to be interesting to study this as well as moduli spaces of higher dimensional representations in some detail.

Concurrently with the article mentioned above, we have established strong connections between representations of Clifford algebras of cubic forms and the geometry of smooth curves representing $c_1$ of their associated Ulrich bundles.  This web of results (most of which generalize to del Pezzo surfaces that are not of the form $X_f$) is work in progress and will appear elsewhere.

\vskip0.5cm

\textbf{Acknowledgments:} The second author was partially supported by the NSF grants DMS-0603684 and DMS-1004306. The third author was supported by the NSF grant RTG DMS-0502170.

\section{Generalities on Representations of Generalized Clifford Algebras and Ulrich Bundles}
In this section, we give an overview of representations of generalized Clifford algebras and their relation to Ulrich bundles on hypersurfaces. Throughout the article, we work with an algebraically closed base field $k$ of characteristic 0.

\subsection{Generalized Clifford Algebras}
\label{gca}

Let $f(x_1, \cdots ,x_n)$ be a homogeneous form of degree $d \geq 2.$  We assume $f$ is nondegenerate, i.e. that the hypersurface $X \subseteq \mathbb{P}^{n}$ defined by $w^d=f(x_1, \cdots ,x_n)$ is nonsingular.

\begin{defn}
The \emph{generalized Clifford algebra} associated to $f,$ which we denote by $C_f,$ is defined to be the quotient of the associative $k$-algebra $k\{y_1, \cdots ,y_n\}$ by the two-sided ideal $I$ that is generated by the set
\begin{equation}
\{(\alpha_{1}y_ + \cdots +\alpha_{n}y_n)^d-f(\alpha_{1}, \cdots ,\alpha_{n}) : \alpha_{1}, \cdots ,\alpha_{n} \in k\}.
\end{equation}
\end{defn}

In the special case $d=2,$ $C_f$ is just the classical Clifford algebra associated to a nondegenerate quadratic form.

\begin{defn}
Let $C_{f}$ be the Clifford algebra associated to $f.$
\begin{itemize}
\item[(i)]{A \textit{representation of $C_{f}$} is a $k-$algebra homomorphism $\phi: C_{f} \rightarrow \textnormal{Mat}_{m}(k).$  The positive integer $m$ is the \textit{dimension} of $\phi.$}
\item[(ii)]{Two $m-$dimensional representations $\phi_{1},\phi_{2}$ of $C_{f}$ are \textit{equivalent} if there exists an invertible $\theta \in \textnormal{Mat}_{m}(k)$ such that $\phi_{1}=\theta\phi_{2}\theta^{-1}.$}
\end{itemize}
\end{defn}

Hence, an $m$-dimensional representation of $C_f$ is equivalent to matrices $A_1, \cdots ,A_n \in {\rm Mat}_{m}(k)$ for which the identity
\begin{equation}
\label{linearization}
(x_{1}A_{1} + \cdots x_{n}A_{n})^d=f(x_1, \cdots ,x_n) \cdot I_{m}
\end{equation}
holds in $\textnormal{Mat}_{m}(k[x_1, \cdots ,x_n])$.

The following result is a special case of Corollary 2 in \cite{vdB} or Proposition 1.1 in \cite{HT}.  For the reader's convenience, we provide the proof given in \cite{HT}, which is much simpler in the nondegenerate case.

\begin{prop}
\label{divbyd}
The dimension of any representation of $C_{f}$ is equal to $dr$ for some integer $r \geq 1.$
\end{prop}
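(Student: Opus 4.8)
The plan is to extract a scalar identity from the linearization relation \eqref{linearization} and then compare degrees, exploiting that $f$ is irreducible because $X$ is smooth. Given an $m$-dimensional representation, take the matrix $M(x) = x_1A_1 + \cdots + x_nA_n \in \mathrm{Mat}_m(k[x_1,\dots,x_n])$, so that $M(x)^d = f(x)\cdot I_m$. First I would take determinants of both sides: $\det M(x)^d = \det(M(x))^d = f(x)^m$. Thus $f(x)^m$ is a $d$-th power in the polynomial ring $k[x_1,\dots,x_n]$, namely the $d$-th power of the homogeneous polynomial $\det M(x)$ (which has degree $m$). The next step is to conclude from unique factorization in $k[x_1,\dots,x_n]$ that $d \mid m$: since $f$ is irreducible, write $f$'s factorization trivially as $f$ itself; then $f^m = (\det M(x))^d$ forces every irreducible factor — here just $f$, up to the fact that $f$ could a priori be a prime power, but smoothness rules that out — to appear with multiplicity divisible by $d$ on the right, hence $d \mid m$.

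The one point that needs care is justifying that $f$ is irreducible (and not merely, say, a perfect power of a lower-degree form). This follows from the nondegeneracy hypothesis: if $f = g^e$ with $e \geq 2$, or more generally if $f$ had a repeated irreducible factor, then the hypersurface $w^d = f(x_1,\dots,x_n)$ in $\mathbb{P}^n$ would be singular along the locus where that repeated factor and $w$ both vanish, contradicting smoothness of $X$. (In fact one only needs that $f$ is not a $p$-th power for any prime $p \mid d$, which is enough to run the divisibility argument, but irreducibility is the clean statement and is what nondegeneracy gives.) So I would insert a short lemma or remark to this effect, then feed it into the unique-factorization argument above.

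Writing $m = dr$ with $r \geq 1$ — the bound $r \geq 1$ being automatic since $m$ is a positive integer by definition of a representation — completes the proof. The main obstacle, such as it is, is purely the irreducibility bookkeeping: making sure the chain "$f^m$ is a $d$-th power $\Rightarrow$ $d$ divides the multiplicity of each prime factor of $f$ in $f^m$, namely $m$" is airtight, which hinges entirely on $f$ having no irreducible factor of multiplicity $> 1$. Everything else — taking determinants, comparing degrees (a sanity check: $\deg \det M(x) = m$ and $\deg f = d$, consistent with $\deg(f^m) = dm = \deg((\det M)^d)$) — is immediate.
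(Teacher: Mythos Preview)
Your proposal is correct and follows essentially the same route as the paper's proof: take determinants of the linearization identity to get $(\det M(x))^d = f(x)^m$, invoke unique factorization using that $f$ has no repeated irreducible factors (a consequence of nondegeneracy), and conclude $d \mid m$. The paper phrases the key hypothesis as ``$f$ is irreducible,'' whereas you correctly observe that squarefreeness is what is actually needed and what nondegeneracy delivers; this is a slight sharpening on your part but not a different method.
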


\begin{proof}
Let $\phi: C_{f} \rightarrow {\rm Mat}^m(k)$ be an $m-$dimensional representation of $C_{f},$ and let $A_1, \cdots ,A_n$ be the associated $m \times m$ matrices over $k.$  Taking determinants on both sides of (\ref{linearization}), we obtain the relation
\begin{equation}
(\det(x_{1}A_{1}+ \cdots +x_{n}A_{n}))^{d}=f(x_1, \cdots ,x_n)^{m}
\end{equation}
Since $f(x_1, \cdots ,x_n)$ is nondegenerate, it is irreducible, and by unique factorization we must have that the degree-$m$ polynomial $\det(x_{1}A_{1}+ \cdots + x_{m}A_{m})$ is equal to $f(x_1, \cdots ,x_n)^r$ for some integer $r \geq 1.$  Comparing degrees shows that we must have $m=dr.$
\end{proof}

Every study of representations begins with the irreducibles, which we now define.

\begin{defn}
A representation $\phi: C_f \to {\rm Mat}_m(k)$ is called \emph{irreducible} if the image of $C_f$ generates ${\rm Mat}_m(k)$ as a $k$-algebra. Otherwise, $\phi$ is called \emph{reducible}.
\end{defn}

It is not immediate at this point that $C_f$ admits any representations, irreducible or otherwise.

Let us now describe how to associate to each $dr$-dimensional representation $\phi$ of $C_f$ a vector bundle of rank $r$ on the hypersurface $X_f$.  Define a $k$-algebra homomorphism
\begin{equation}
 \Phi: k[x_1, \cdots ,x_n, w] \rightarrow \textnormal{Mat}_{dr}(k[x_1, \cdots ,x_n, w])
\end{equation}
by setting $\Phi(x_i)=x_i \cdot I_{dr}$ for $i=1, \cdots ,n$ and $\Phi(w)=x_{1}A_{1} + \cdots + x_{n}A_{n}$. By (\ref{linearization}) this descends to a homomorphism $\overline{\Phi}: S_X \rightarrow \textnormal{Mat}_{dr}(k[x_1, \cdots ,x_n])$, where $S_X$ is the homogeneous coordinate ring of $X$. This yields an $S_X$-module structure on $k[x_1, \cdots ,x_n]^{dr}$.

Since composing $\overline{\Phi}$ with the natural inclusion $k[x_1, \cdots ,x_n] \hookrightarrow S_X$ yields the natural $k[x_1, \cdots ,x_n]$-module structure on $k[x_1, \cdots ,x_n]^{dr},$ the geometric content of our discussion may be summarized as follows: the homomorphism $\overline{\Phi}$ yields an $\mathcal{O}_X$-module $\mathcal{E}$ for which $\pi_{\ast}\mathcal{E} \cong \mathcal{O}_{\mathbb{P}^{n-1}}^{dr},$ where $\pi:X \rightarrow \mathbb{P}^{n-1}$ is the projection which forgets the variable $w$. It can be shown that the assumption that $X$ is smooth implies that $\mathcal{E}$ is locally free. The main result of \cite{vdB} (Proposition 1 in \cite{vdB}) implies that this construction yields an essentially bijective correspondence.

\begin{prop}
\label{vdbcor}
There is a one-to-one correspondence between equivalence classes of $dr$-dimensional representations of $C_f$ and isomorphism classes of vector bundles $\mathcal{E}$ of rank $r$ on the hypersurface $X_f$ such that $\pi_{\ast}\mathcal{E} \cong \mathcal{O}_{\mathbb{P}^{n-1}}^{dr}$. \hfill \qedsymbol
\end{prop}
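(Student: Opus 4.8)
The plan is to construct explicit maps in both directions and check they are mutually inverse. Everything is formal once one uses that $\pi\colon X\to\mathbb{P}^{n-1}$ is a finite (indeed finite, free of rank $d$) morphism, except for one genuinely geometric input — the local freeness of the sheaf attached to a representation — which is where the smoothness of $X$ enters. In the forward direction I would take exactly the construction preceding the statement: a $dr$-dimensional representation $\phi$, given by $A_1,\dots,A_n\in\operatorname{Mat}_{dr}(k)$ satisfying (\ref{linearization}), produces the graded $S_X$-module $M=k[x_1,\dots,x_n]^{dr}$ on which $w$ acts by the degree-one matrix $x_1A_1+\cdots+x_nA_n$, and I set $\mathcal{E}_\phi=\widetilde M$. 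Since $w$ acts with linear entries, $\pi_\ast\mathcal{E}_\phi$ is canonically $\mathcal{O}_{\mathbb{P}^{n-1}}^{dr}$. If $\phi'=\theta\phi\theta^{-1}$ then left multiplication by $\theta$ is an isomorphism of graded $S_X$-modules $M\xrightarrow{\sim}M'$, so $\mathcal{E}_\phi\cong\mathcal{E}_{\phi'}$ and $[\phi]\mapsto[\mathcal{E}_\phi]$ is well defined.

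The crux is that $\mathcal{E}_\phi$ is locally free of rank $r$, and this is the step I expect to require real care. Since $S_X$ is finite and free over $k[x_1,\dots,x_n]$ it is Cohen--Macaulay, and because $\pi$ is finite, depth is unchanged along the local extension $\mathcal{O}_{\mathbb{P}^{n-1},q}\to\mathcal{O}_{X,p}$ for a closed point $p\in X$ over $q=\pi(p)$; as $(\pi_\ast\mathcal{E}_\phi)_q$ is free of positive rank this forces
\[
\operatorname{depth}_{\mathcal{O}_{X,p}}(\mathcal{E}_\phi)_p \;=\; n-1 \;=\; \dim\mathcal{O}_{X,p}
\]
for every closed point $p$. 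Thus $(\mathcal{E}_\phi)_p$ is a maximal Cohen--Macaulay module over the \emph{regular} local ring $\mathcal{O}_{X,p}$, and the Auslander--Buchsbaum formula — applicable because a regular local ring has finite global dimension — forces its projective dimension to be $0$, so $\mathcal{E}_\phi$ is locally free; comparing generic ranks across the degree-$d$ map $\pi$ gives rank $r$. Without smoothness this step genuinely fails: one is left only with a maximal Cohen--Macaulay module (a ``matrix factorization''), not a vector bundle.

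For the inverse direction I would use that, $\pi$ being affine, giving $\mathcal{E}$ is the same as giving $\pi_\ast\mathcal{E}$ together with its module structure over $\pi_\ast\mathcal{O}_X$, which is generated as an $\mathcal{O}_{\mathbb{P}^{n-1}}$-algebra by the action of $w$ subject only to $w^d=f$. Hence, after choosing a trivialization $\tau\colon\pi_\ast\mathcal{E}\xrightarrow{\sim}\mathcal{O}_{\mathbb{P}^{n-1}}^{dr}$, the structure is precisely a morphism $W\in\operatorname{Hom}(\mathcal{O}^{dr},\mathcal{O}^{dr}(1))=\operatorname{Mat}_{dr}\!\big(H^0(\mathbb{P}^{n-1},\mathcal{O}(1))\big)$ with $W^{d}=f\cdot I_{dr}$ in $\operatorname{Hom}(\mathcal{O}^{dr},\mathcal{O}^{dr}(d))$; writing $W=x_1A_1+\cdots+x_nA_n$ recovers matrices obeying (\ref{linearization}), i.e.\ a representation $\phi_\mathcal{E}$. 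A different $\tau$ conjugates $W$ by an element of $GL_{dr}(H^0(\mathcal{O}))=GL_{dr}(k)$, and an isomorphism $\mathcal{E}\cong\mathcal{E}'$ induces such a conjugation through $\tau'\circ\pi_\ast(\cdot)\circ\tau^{-1}$, so $[\phi_\mathcal{E}]$ depends only on the isomorphism class of $\mathcal{E}$. Finally the two assignments are visibly inverse: $\mathcal{E}_{\phi_\mathcal{E}}$ and $\mathcal{E}$ have the same pushforward carrying the same $w$-action, hence coincide as $\pi_\ast\mathcal{O}_X$-modules and therefore as sheaves on $X$, and unwinding the definitions shows $\phi_{\mathcal{E}_\phi}$ is equivalent to $\phi$. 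This yields the asserted bijection.
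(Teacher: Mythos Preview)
The paper does not supply its own proof of this proposition: it is stated with a \qedsymbol and attributed to Proposition~1 of \cite{vdB}, with only the forward construction (representation $\leadsto$ sheaf) sketched in the preceding paragraph and the remark that smoothness of $X$ implies local freeness. So there is no in-paper argument to compare against; you have written out what the paper leaves to the reference.

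Your argument is correct. The forward construction matches the paper's sketch verbatim, and your depth/Auslander--Buchsbaum proof of local freeness is precisely the content the paper later isolates as Lemma~\ref{locfree} (there proved via the change-of-rings spectral sequence ${\rm Ext}^i_S(M,{\rm Ext}^j_R(S,R))\Rightarrow{\rm Ext}^{i+j}_R(M,R)$ rather than Auslander--Buchsbaum, but the two arguments are equivalent for finite flat extensions of regular local rings). Your inverse construction---using that $\pi$ is affine so that $\mathcal{E}$ is determined by $\pi_\ast\mathcal{E}$ together with its $\pi_\ast\mathcal{O}_X$-module structure, and that this structure amounts to the single operator $W=x_1A_1+\cdots+x_nA_n\in\operatorname{Hom}(\mathcal{O}^{dr},\mathcal{O}^{dr}(1))$ subject to $W^d=f\cdot I_{dr}$---is the standard way to invert van den Bergh's map, and your check that changing the trivialization (or passing to an isomorphic $\mathcal{E}$) conjugates the $A_i$ by an element of $GL_{dr}(k)=GL_{dr}\big(H^0(\mathcal{O}_{\mathbb{P}^{n-1}})\big)$ is exactly what makes the correspondence one between \emph{equivalence classes} and \emph{isomorphism classes}.
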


This correspondence was used in \cite{vdB}, together with standard facts about vector bundles on curves, to show that a nondegenerate binary form of degree $d \geq 2$ admits irreducible representations of arbitrarily high dimension.  For nondegenerate forms in 3 or more variables, the study of the vector bundles on the geometric side of the correspondence is more involved.

\subsection{Ulrich bundles}
Around the same time as the appearance of \cite{vdB}, the vector bundles in Proposition \ref{vdbcor} were studied in \cite{BHU} as ``maximally generated maximal Cohen-Macaulay modules".  This study grew (partly) out of earlier work of Ulrich on Gorenstein rings.

Throughout this section, $X$ denotes a smooth hypersurface of degree $d$ in $\mathbb{P}^n.$  It should be noted that all the definitions and results up to and including Corollary \ref{assgrad} generalize naturally to smooth varieties of arbitrary codimension in $\mathbb{P}^{n}.$  While there is significant overlap between some results here and those in Section 2 of \cite{CH}, our proofs differ substantially from those in \cite{CH}.

\begin{defn}
A vector bundle $\mathcal{E}$ of rank $r$ on $X$ is \emph{Ulrich} if for some (and therefore every) linear projection $\pi:X \rightarrow \mathbb{P}^{n-1},$ we have that $\pi_{\ast}\mathcal{E} \cong \mathcal{O}_{\mathbb{P}^{n-1}}^{dr}$.
\end{defn}

As an immediate consequence of this definition and the surjectivity of the adjunction map, we have
\begin{cor}
\label{ggsect}
Any Ulrich bundle of rank $r$ on $X$ is globally generated and has $dr$ global sections. \hfill \qedsymbol
\end{cor}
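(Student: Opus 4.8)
The plan is to run everything through the linear projection $\pi\colon X\to\mathbb{P}^{n-1}$, using two features of it: it is a finite morphism (the center of projection is a point disjoint from $X$, so every fibre is a length-$d$ subscheme of a line through that point), and, being finite, it is affine. Both assertions of the corollary will then follow formally from the Ulrich hypothesis $\pi_{\ast}\mathcal{E}\cong\mathcal{O}_{\mathbb{P}^{n-1}}^{dr}$, together with the elementary facts that $\pi^{\ast}\mathcal{O}_{\mathbb{P}^{n-1}}\cong\mathcal{O}_{X}$ and that $\pi^{\ast}$ commutes with finite direct sums.

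For global generation I would invoke the counit of the adjunction $(\pi^{\ast},\pi_{\ast})$, namely the natural map $\pi^{\ast}\pi_{\ast}\mathcal{E}\to\mathcal{E}$. Since $\pi$ is affine this map is surjective: over an affine $\operatorname{Spec}A\subseteq\mathbb{P}^{n-1}$ with preimage $\operatorname{Spec}B$ and $\mathcal{E}$ corresponding to the $B$-module $M$, it is the evaluation map $B\otimes_{A}M\to M$, $b\otimes m\mapsto bm$, which is onto because $1\otimes m\mapsto m$. Feeding in the Ulrich hypothesis gives $\pi^{\ast}\pi_{\ast}\mathcal{E}\cong\pi^{\ast}(\mathcal{O}_{\mathbb{P}^{n-1}}^{dr})\cong\mathcal{O}_{X}^{dr}$, so $\mathcal{E}$ is a quotient of a trivial bundle and hence globally generated.

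For the count of sections I would use that $\pi_{\ast}$ preserves global sections (valid for any morphism, since $\Gamma(\mathbb{P}^{n-1},\pi_{\ast}\mathcal{E})=\Gamma(X,\mathcal{E})$): thus $H^{0}(X,\mathcal{E})\cong H^{0}(\mathbb{P}^{n-1},\pi_{\ast}\mathcal{E})\cong H^{0}(\mathbb{P}^{n-1},\mathcal{O}_{\mathbb{P}^{n-1}}^{dr})\cong k^{dr}$, so $h^{0}(\mathcal{E})=dr$; combined with the previous paragraph, the surjection $\mathcal{O}_{X}^{dr}\twoheadrightarrow\mathcal{E}$ is realized by a basis of $H^{0}(X,\mathcal{E})$, so $\mathcal{E}$ is globally generated by its space of global sections. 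The only step here that is not purely formal is the surjectivity of the counit $\pi^{\ast}\pi_{\ast}\mathcal{E}\to\mathcal{E}$, and this is exactly where affineness — equivalently, finiteness of the linear projection, which in turn requires the center of projection to lie off $X$ — must be used; so I would make that hypothesis explicit before citing ``the surjectivity of the adjunction map.''
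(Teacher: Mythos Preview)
Your argument is correct and matches the paper's approach exactly: the paper states the corollary as ``an immediate consequence of this definition and the surjectivity of the adjunction map,'' and you have simply spelled out why the counit $\pi^{\ast}\pi_{\ast}\mathcal{E}\to\mathcal{E}$ is surjective (affineness of the finite projection) and why $h^{0}(\mathcal{E})=dr$ (preservation of global sections under pushforward). There is nothing to add.
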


A vector bundle $\mathcal{E}$ on $X$ is called \emph{arithmetically Cohen-Macaulay (ACM)} if $H^i(X,\mathcal{E}(n))=0$ for all $n \in \mathbb{Z}$ and $1 \leq i \leq \dim{X}-1.$  Using this condition,  we can characterize the Ulrich bundles on $X$ (with ${\rm dim}(X) \geq 2$) of rank $r$ as follows.
\begin{prop}
\label{ulrichacm}
Let ${\rm dim}(X) \geq 2$ and $\mathcal{E}$ be a vector bundle of rank $r$ on $X$.  Then the following are equivalent:
\begin{itemize}
\item[(i)]{$\mathcal{E}$ is Ulrich.}
\item[(ii)]{$\mathcal{E}$ is ACM and its Hilbert polynomial is $dr\binom{t+n-1}{n-1}$.}
\item[(iii)]{The $\mathcal{O}_{\mathbb{P}^{n}}$-module $\mathcal{E}$ admits a minimal graded free resolution of the form
\begin{equation}
0 \longrightarrow \mathcal{O}_{\mathbb{P}^{n}}(-1)^{dr} \longrightarrow \mathcal{O}_{\mathbb{P}^{n}}^{dr} \longrightarrow \mathcal{E} \longrightarrow 0.
\end{equation}}
\end{itemize}
\end{prop}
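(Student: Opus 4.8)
The plan is to prove the cycle of implications (i) $\Rightarrow$ (iii) $\Rightarrow$ (ii) $\Rightarrow$ (i). I would begin by recording the properties of a linear projection $\pi\colon X\to\mathbb{P}^{n-1}$ that drive everything: it is finite, and it is flat by miracle flatness (as $X$ is Cohen--Macaulay and $\mathbb{P}^{n-1}$ is regular of the same dimension), of degree $d$; moreover $\pi^{\ast}\mathcal{O}_{\mathbb{P}^{n-1}}(1)\cong\mathcal{O}_{X}(1)$, so the projection formula together with the exactness of $\pi_{\ast}$ yields $\pi_{\ast}(\mathcal{E}(t))\cong(\pi_{\ast}\mathcal{E})(t)$ and $H^{i}(X,\mathcal{E}(t))\cong H^{i}(\mathbb{P}^{n-1},(\pi_{\ast}\mathcal{E})(t))$ for all $i$ and $t$; in particular $\pi_{\ast}\mathcal{E}$ is locally free of rank $dr$ on $\mathbb{P}^{n-1}$. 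I also use that a vector bundle on the smooth hypersurface $X$, viewed as an $\mathcal{O}_{\mathbb{P}^{n}}$-module, has projective dimension $\le1$ (Auslander--Buchsbaum applied stalkwise), so any first syzygy of it by a vector bundle on $\mathbb{P}^{n}$ is again a vector bundle.

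For (i) $\Rightarrow$ (iii): by Corollary \ref{ggsect}, $\mathcal{E}$ is globally generated with $h^{0}(\mathcal{E})=dr$, so evaluation of global sections gives an exact sequence
\begin{equation}
0\To\mathcal{K}\To\mathcal{O}_{\mathbb{P}^{n}}^{dr}\To\mathcal{E}\To 0
\end{equation}
with $\mathcal{K}$ locally free of rank $dr$ (the rank count uses that $\mathcal{E}$ is torsion on $\mathbb{P}^{n}$). Since the cohomology of $\mathcal{E}(t)$ on $\mathbb{P}^{n}$ coincides with that of $\mathcal{O}_{\mathbb{P}^{n-1}}^{dr}(t)$ by the identification above, the long exact sequence forces $H^{i}(\mathbb{P}^{n},\mathcal{K}(t))=0$ for $1\le i\le n-1$ and all $t$; the only delicate value is $i=1$, where one needs the multiplication map $H^{0}(\mathcal{O}_{\mathbb{P}^{n}}(t))\otimes H^{0}(\mathcal{E})\To H^{0}(\mathcal{E}(t))$ to be surjective. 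This holds because $\bigoplus_{t}H^{0}(X,\mathcal{E}(t))$, viewed as a graded module over $k[x_{1},\dots,x_{n}]$, is isomorphic to $\bigoplus_{t}H^{0}(\mathbb{P}^{n-1},\mathcal{O}(t))^{dr}$, hence free on its degree-zero piece and a fortiori generated in degree zero over the homogeneous coordinate ring of $X$. Thus $\mathcal{K}$ has no intermediate cohomology on $\mathbb{P}^{n}$, so by Horrocks' splitting criterion $\mathcal{K}\cong\bigoplus_{l}\mathcal{O}_{\mathbb{P}^{n}}(c_{l})$; the conditions $H^{0}(\mathcal{K})=H^{0}(\mathcal{K}(-1))=0$ give $c_{l}\le-1$, while $\sum_{l}c_{l}=c_{1}(\mathcal{K})=-c_{1}(\mathcal{E})=-dr$ (times the hyperplane class) forces each $c_{l}=-1$. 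This produces the resolution in (iii), which is minimal since its one map has entries of positive degree.

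For (iii) $\Rightarrow$ (ii): twisting the resolution in (iii) by $t$ and taking cohomology on $\mathbb{P}^{n}$, the vanishing of the intermediate cohomology of $\mathcal{O}_{\mathbb{P}^{n}}(t)$ and $\mathcal{O}_{\mathbb{P}^{n}}(t-1)$ gives $H^{i}(X,\mathcal{E}(t))=H^{i}(\mathbb{P}^{n},\mathcal{E}(t))=0$ for $1\le i\le n-2$, i.e. $\mathcal{E}$ is ACM, while additivity of Euler characteristics and Pascal's identity give Hilbert polynomial $dr\bigl(\binom{t+n}{n}-\binom{t+n-1}{n}\bigr)=dr\binom{t+n-1}{n-1}$. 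For (ii) $\Rightarrow$ (i): the ACM hypothesis says $\mathcal{F}:=\pi_{\ast}\mathcal{E}$ has no intermediate cohomology on $\mathbb{P}^{n-1}$, so by Horrocks' criterion $\mathcal{F}\cong\bigoplus_{l}\mathcal{O}_{\mathbb{P}^{n-1}}(a_{l})$; its Hilbert polynomial equals that of $\mathcal{E}$, namely $dr\binom{t+n-1}{n-1}$, whence $\operatorname{ch}(\mathcal{F})=dr$ and therefore $\sum_{l}a_{l}^{k}=0$ for $1\le k\le n-1$. In particular $\sum_{l}a_{l}^{2}=0$, so all $a_{l}=0$ and $\mathcal{F}\cong\mathcal{O}_{\mathbb{P}^{n-1}}^{dr}$, i.e. $\mathcal{E}$ is Ulrich.

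The main obstacle is the implication (ii) $\Rightarrow$ (i): after Horrocks upgrades ``no intermediate cohomology'' to a splitting, one must still rule out nonzero twists using only the Hilbert polynomial, and this is precisely where the hypothesis $\dim X\ge2$ enters. On a curve the identity $\binom{t+2}{1}+\binom{t}{1}=2\binom{t+1}{1}$ shows that $\mathcal{O}(1)\oplus\mathcal{O}(-1)$ and $\mathcal{O}^{2}$ share a Hilbert polynomial, so one genuinely needs the second power sum $\sum_{l}a_{l}^{2}$, which is the degree-$2$ part of $\operatorname{ch}(\mathcal{F})$ and is only seen once $\mathbb{P}^{n-1}$ has a nonzero class in $H^{4}$, i.e. once $n-1\ge2$. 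A secondary technical point, needed to make the resolution in (i) $\Rightarrow$ (iii) \emph{linear} rather than merely finite, is the surjectivity of the degree-zero multiplication map, which I deduce from the triviality of $\pi_{\ast}\mathcal{E}$.
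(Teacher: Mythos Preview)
Your argument is correct. The paper does not supply its own proof of this proposition; it simply refers the reader to Proposition~2.1 of \cite{ESW}. Your cycle (i)$\Rightarrow$(iii)$\Rightarrow$(ii)$\Rightarrow$(i), driven by Horrocks' splitting criterion applied first on $\mathbb{P}^{n}$ (to identify the syzygy $\mathcal{K}$) and then on $\mathbb{P}^{n-1}$ (to identify $\pi_{\ast}\mathcal{E}$), is a clean self-contained route that is in the same spirit as the argument in \cite{ESW}. The two technical points you flag are handled correctly: the surjectivity of $H^{0}(\mathcal{O}_{\mathbb{P}^{n}}(t))\otimes H^{0}(\mathcal{E})\to H^{0}(\mathcal{E}(t))$ does follow from the freeness of $\bigoplus_{t}H^{0}(\mathcal{E}(t))\cong k[x_{1},\dots,x_{n}]^{dr}$ as a $k[x_{1},\dots,x_{n}]$-module, and your identification of where $\dim X\ge 2$ is genuinely used---namely to make the quadratic power sum $\sum_{l}a_{l}^{2}$ visible in $\operatorname{ch}(\pi_{\ast}\mathcal{E})$ on $\mathbb{P}^{n-1}$---is exactly the point. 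One cosmetic remark: once you know $H^{0}(\mathcal{K})=0$ you already have $c_{l}\le -1$, so the additional condition $H^{0}(\mathcal{K}(-1))=0$ is not needed.
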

\begin{proof}
See Proposition 2.1 in \cite{ESW}.
\end{proof}





Next, we discuss stability of Ulrich bundles. We begin by recalling the notion of semistability (in the sense of Gieseker-Maruyama).

\begin{defn}
\label{giesstabdef}
If $\mathcal{G}$ is a torsion-free sheaf on $X$ of rank $r$, the \emph{reduced Hilbert polynomial} of $\mathcal{G}$ is $p(\mathcal{G}):=\frac{1}{r} \cdot H_{\mathcal{G}}(t)$, where $H_{\mathcal{G}}(t)$ is the Hilbert polynomial of $\mathcal{G}$.
\end{defn}

\begin{defn}
A torsion-free sheaf $\mathcal{E}$ of rank $r$ on $X$ is \emph{semistable} (resp. \emph{stable}) if for every subsheaf $\mathcal{F}$ of $\mathcal{E}$ for which $0 < \textnormal{rank}(\mathcal{F}) < r$ we have that (w.r.t. lexicographical order)
\begin{equation}
p(\mathcal{F}) \leq p(\mathcal{E}) \hskip10pt \textnormal{  (resp. }p(\mathcal{F}) < p(\mathcal{E})).
\end{equation}
\end{defn}

\begin{prop}
\label{cliffgiesstab}
Let $\mathcal{E}$ be an Ulrich bundle of rank $r \geq 1$ on $X$. Then $\mathcal{E}$ is semistable.
\end{prop}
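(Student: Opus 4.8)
The plan is to exploit the characterization of Ulrich bundles in Proposition \ref{ulrichacm}(ii): an Ulrich bundle $\mathcal{E}$ of rank $r$ has reduced Hilbert polynomial $p(\mathcal{E}) = d\binom{t+n-1}{n-1}$, which depends only on $X$ and not on $r$. So to prove semistability it suffices to show that every subsheaf $\mathcal{F} \subseteq \mathcal{E}$ with $0 < \operatorname{rank}(\mathcal{F}) = s < r$ satisfies $p(\mathcal{F}) \leq d\binom{t+n-1}{n-1}$. The natural strategy is to first reduce to the case where $\mathcal{F}$ is itself a subbundle whose quotient $\mathcal{E}/\mathcal{F}$ is a vector bundle (replacing $\mathcal{F}$ by its saturation only increases $p(\mathcal{F})$, so it is harmless), and then to push everything down to $\mathbb{P}^{n-1}$ via the finite flat projection $\pi$.

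The key step is the following observation about $\pi_*$. Since $\pi$ is finite, $\pi_*$ is exact, so from $0 \to \mathcal{F} \to \mathcal{E} \to \mathcal{Q} \to 0$ on $X$ we get $0 \to \pi_*\mathcal{F} \to \pi_*\mathcal{E} \to \pi_*\mathcal{Q} \to 0$ on $\mathbb{P}^{n-1}$, with $\pi_*\mathcal{E} \cong \mathcal{O}_{\mathbb{P}^{n-1}}^{dr}$. Thus $\pi_*\mathcal{F}$ is a subsheaf of a trivial bundle, hence is globally generated at least generically; more to the point, I want to bound its Hilbert polynomial. Because $\pi_*\mathcal{F}$ is a subsheaf of $\mathcal{O}_{\mathbb{P}^{n-1}}^{dr}$, for every $m$ the space $H^0(\mathbb{P}^{n-1}, (\pi_*\mathcal{F})(m))$ injects into $H^0(\mathbb{P}^{n-1}, \mathcal{O}_{\mathbb{P}^{n-1}}(m)^{dr})$, and for $m \gg 0$ higher cohomology of $(\pi_*\mathcal{F})(m)$ vanishes, so $H_{\pi_*\mathcal{F}}(m) \leq dr \cdot \binom{m+n-1}{n-1}$ for $m \gg 0$. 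The twist by $m$ on $\mathbb{P}^{n-1}$ pulls back (via the projection formula, $\pi^*\mathcal{O}_{\mathbb{P}^{n-1}}(1) = \mathcal{O}_X(1)$) to the twist by $m$ on $X$, and $\chi(X, \mathcal{F}(m)) = \chi(\mathbb{P}^{n-1}, (\pi_*\mathcal{F})(m))$. Comparing leading terms: $H_{\mathcal{F}}(t)$ has degree $n-1 = \dim X$ with leading coefficient $s/(n-1)!$, while the bound forces the leading coefficient of $H_{\pi_*\mathcal{F}}$ to be at most $dr/(n-1)!$... but this alone only gives $s \leq dr$, which is too weak. The fix is that $\pi_*\mathcal{F}$, being a subsheaf of the trivial bundle, is itself "sub-Ulrich" in a quantitative sense: one shows $H_{\pi_*\mathcal{F}}(m) \leq d\cdot s \cdot \binom{m+n-1}{n-1}$ — i.e. the rank of $\pi_*\mathcal{F}$ as an $\mathcal{O}_{\mathbb{P}^{n-1}}$-module is $ds$ (since $\pi$ has degree $d$ and $\mathcal{F}$ has rank $s$ on $X$), and a torsion-free subsheaf of $\mathcal{O}_{\mathbb{P}^{n-1}}^{dr}$ of rank $ds$ has Hilbert polynomial bounded above, term by term in the binomial basis, by that of $\mathcal{O}_{\mathbb{P}^{n-1}}^{ds}$. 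This last fact is the crux: a torsion-free subsheaf of a trivial bundle, twisted high, has at most as many sections as a trivial bundle of the same rank, because the quotient is generically supported and its twists have more and more sections being "used up."

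I expect the main obstacle to be making precise the comparison "$p(\mathcal{F}) \leq p(\mathcal{O}_X$-analogue$)$" at the level of the full Hilbert polynomial rather than just its leading coefficient, which is what stability requires (lexicographic comparison of all coefficients). The clean route is: let $\mathcal{G} = \pi_*\mathcal{F} \subseteq \mathcal{O}_{\mathbb{P}^{n-1}}^{dr}$, a torsion-free sheaf of rank $ds$ on $\mathbb{P}^{n-1}$. Then $\mathcal{O}_{\mathbb{P}^{n-1}}^{dr}/\mathcal{G}$ contains the torsion-free rank-$(dr-ds)$ sheaf $\mathcal{G}' = \mathcal{O}^{dr}/\mathcal{G}_{\mathrm{sat}}$ plus a torsion part, and dualizing the inclusion $\mathcal{G} \hookrightarrow \mathcal{O}^{dr}$ and comparing with the reflexive hull, one gets $H_{\mathcal{G}}(m) \leq H_{\mathcal{O}^{ds}}(m)$ for all $m$ (not just $m \gg 0$) by a Riemann–Roch plus vanishing argument on projective space, where all the relevant sheaves are ACM-adjacent. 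Dividing by the ranks: $p(\mathcal{F}) = \frac{1}{s}H_{\mathcal{F}}(t) = \frac{1}{s}H_{\mathcal{G}}(t) \leq \frac{1}{s} \cdot ds\binom{t+n-1}{n-1} = d\binom{t+n-1}{n-1} = p(\mathcal{E})$, which is exactly semistability. One should also remark at the end that the inequality can be strict or not, matching the fact (used later in the paper) that irreducible representations correspond to \emph{stable} Ulrich bundles, which form a proper subset.
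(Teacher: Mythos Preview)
Your overall strategy---push a subsheaf $\mathcal{F}\subseteq\mathcal{E}$ forward along the finite projection $\pi$, use $H_{\mathcal{F}}=H_{\pi_*\mathcal{F}}$, and bound the Hilbert polynomial of $\pi_*\mathcal{F}$ inside $\mathcal{O}_{\mathbb{P}^{n-1}}^{dr}$---is exactly the paper's. The difference is that the paper dispatches the key step in one line: $\mathcal{O}_{\mathbb{P}^{n-1}}^{dr}$ is Gieseker-semistable (a standard fact, since $\mathcal{O}_{\mathbb{P}^{n-1}}$ is stable and a direct sum of copies of a stable sheaf is semistable), so $p(\pi_*\mathcal{F})\leq p(\pi_*\mathcal{E})$; multiplying both sides by $d$ gives $p(\mathcal{F})\leq p(\mathcal{E})$.

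What you identify as ``the crux''---that a torsion-free subsheaf $\mathcal{G}\subseteq\mathcal{O}_{\mathbb{P}^{n-1}}^{dr}$ of rank $ds$ satisfies $H_{\mathcal{G}}(t)\leq ds\binom{t+n-1}{n-1}$ in the lexicographic sense---\emph{is} precisely the semistability of the trivial bundle, and you are essentially trying to re-prove it from scratch. Your sketched arguments for this (sections being ``used up,'' dualizing and comparing with the reflexive hull, a ``Riemann--Roch plus vanishing'' argument) are vague as written, and the stronger claim $H_{\mathcal{G}}(m)\leq H_{\mathcal{O}^{ds}}(m)$ for \emph{all} $m$ is more than you need and not obviously justified by what you wrote. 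None of this is wrong in spirit, but it obscures a two-line proof. The saturation step and the reduction to locally free quotients are likewise unnecessary: semistability is tested against arbitrary torsion-free subsheaves, and the pushforward argument works for those directly.
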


\begin{proof}
Let $\mathcal{F}$ be a rank-$s$ torsion-free coherent subsheaf of $\mathcal{E}$.  Then $\pi_{\ast}\mathcal{F}$ is a rank-$ds$ torsion-free coherent subsheaf of $\pi_{\ast}\mathcal{E}=\mathcal{O}_{\mathbb{P}^{n-1}}^{dr},$ and since $\mathcal{O}_{\mathbb{P}^{n-1}}^{dr}$ is semistable, we have that $p(\pi_{\ast}\mathcal{F}) \leq p(\pi_{\ast}\mathcal{E})$.  Since cohomology is preserved under finite pushforward, we have that $d \cdot p(\pi_{\ast}\mathcal{F})=p(\mathcal{F})$ and $d \cdot p(\pi_{\ast}\mathcal{E})=p(\mathcal{E})$.  It follows immediately that $p(\mathcal{F}) \leq p(\mathcal{E})$.
\end{proof}

We now turn to proving the following statement, which generalizes Lemma 2 in \cite{vdB} to nondegenerate homogeneous forms in any number of variables.

\begin{prop}
\label{lem-irred-stable}
Let $f$ be a nondegenerate homogeneous form of degree $d$ in $n \geq 2$ variables.   If $\mathcal{E}$ is an Ulrich bundle on $X_f$, then the representation of the Clifford algebra $C_{f}$ associated to $\mathcal{E}$ is irreducible if and only if $\mathcal{E}$ is stable.
\end{prop}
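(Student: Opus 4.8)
The plan is to translate both conditions into statements about the matrices $A_1,\dots,A_n\in\mathrm{Mat}_{dr}(k)$ attached to $\mathcal{E}$ and to run the argument through the presentation $wI_{dr}-\sum_{i}x_iA_i$, which by Proposition \ref{ulrichacm}(iii) represents the minimal graded free resolution of $\mathcal{E}$ over $\mathcal{O}_{\mathbb{P}^n}$. Two preliminary reductions set this up. First, since $k$ is algebraically closed, Burnside's theorem lets us restate ``the image of $C_f$ generates $\mathrm{Mat}_{dr}(k)$'' as ``$k^{dr}$ is a simple $C_f$-module,'' so that $\phi$ is reducible precisely when there is a subspace $0\neq N\subsetneq k^{dr}$ invariant under all the $A_i$. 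Second, by Proposition \ref{ulrichacm}(ii) every Ulrich bundle on $X_f$, regardless of rank, has the same reduced Hilbert polynomial $p_0:=d\binom{t+n-1}{n-1}$; this is the bridge between ``having a proper Ulrich subsheaf'' and ``failing to be stable.''

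For the implication \emph{$\phi$ reducible $\Rightarrow$ $\mathcal{E}$ not stable}, choose an invariant subspace $0\neq N\subsetneq k^{dr}$ and a basis adapted to it, so that every $A_i$ becomes block upper triangular with diagonal blocks $B_i$ and $C_i$. Restricting $\phi$ to $N$ and passing to $k^{dr}/N$ yields representations of $C_f$; by Proposition \ref{divbyd} they have dimensions $ds$ and $d(r-s)$ with $1\le s\le r-1$, and via Proposition \ref{vdbcor} (equivalently Proposition \ref{ulrichacm}(iii)) the matrices $B_i$ and $C_i$ cut out Ulrich bundles $\mathcal{E}_1$ and $\mathcal{E}_2$ of ranks $s$ and $r-s$. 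The block-triangular shape of $wI_{dr}-\sum_ix_iA_i$ then gives, on taking cokernels, a short exact sequence $0\to\mathcal{E}_1\to\mathcal{E}\to\mathcal{E}_2\to0$. Since $p(\mathcal{E}_1)=p_0=p(\mathcal{E})$ while $0<\mathrm{rank}(\mathcal{E}_1)<r$, the bundle $\mathcal{E}$ violates the strict inequality required for stability.

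For the converse, assume $\mathcal{E}$ is not stable. By Proposition \ref{cliffgiesstab} it is semistable, so there is a saturated subsheaf $0\neq\mathcal{F}\subsetneq\mathcal{E}$ with $p(\mathcal{F})=p(\mathcal{E})=p_0$; then $\mathcal{G}:=\mathcal{E}/\mathcal{F}$ is torsion free with $p(\mathcal{G})=p_0$, and both $\mathcal{F}$ and $\mathcal{G}$ are semistable. The crucial point is that $\mathcal{F}$ and $\mathcal{G}$ are in fact Ulrich. Applying the exact functor $\pi_{\ast}$ gives an inclusion $\pi_{\ast}\mathcal{F}\hookrightarrow\pi_{\ast}\mathcal{E}=\mathcal{O}_{\mathbb{P}^{n-1}}^{dr}$ of a saturated subsheaf whose reduced Hilbert polynomial is that of $\mathcal{O}_{\mathbb{P}^{n-1}}$; working in the abelian category of semistable sheaves on $\mathbb{P}^{n-1}$ with this reduced Hilbert polynomial, $\mathcal{O}_{\mathbb{P}^{n-1}}^{dr}$ is a semisimple object and $\pi_{\ast}\mathcal{F}$ is a subobject, hence a direct summand, hence isomorphic to $\mathcal{O}_{\mathbb{P}^{n-1}}^{ds}$ with $s=\mathrm{rank}(\mathcal{F})$. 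Thus $\mathcal{F}$ is Ulrich, and quotienting shows $\pi_{\ast}\mathcal{G}\cong\mathcal{O}_{\mathbb{P}^{n-1}}^{d(r-s)}$, so $\mathcal{G}$ is Ulrich as well --- in particular locally free, which forces $\mathcal{F}$ to be locally free. Feeding the Ulrich resolutions of $\mathcal{F}$ and $\mathcal{G}$ from Proposition \ref{ulrichacm}(iii) into the horseshoe lemma produces a graded free resolution $0\to\mathcal{O}_{\mathbb{P}^n}(-1)^{dr}\to\mathcal{O}_{\mathbb{P}^n}^{dr}\to\mathcal{E}\to0$ whose differential is block upper triangular; since it has the same Betti numbers as the minimal resolution it is minimal, hence equivalent to $wI_{dr}-\sum_ix_iA_i$ under a pair of constant invertible matrices. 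Comparing coefficients of $w$ pins down the relation between these two matrices, and then comparing coefficients of each $x_i$ shows that, after the corresponding change of basis, every $A_i$ is block upper triangular; the induced coordinate subspace is a $C_f$-invariant $N$ with $0\neq N\subsetneq k^{dr}$, so $\phi$ is reducible.

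The hard part is the converse, and within it the two nonformal steps: showing that a saturated destabilizing subsheaf of an Ulrich bundle is again Ulrich --- which is exactly where the polystability (semisimplicity) of the trivial bundle $\pi_{\ast}\mathcal{E}$ gets used --- and then upgrading the inclusion of Ulrich bundles $\mathcal{F}\hookrightarrow\mathcal{E}$ to a common invariant subspace of the $A_i$, for which I rely on the horseshoe lemma together with uniqueness of minimal graded free resolutions. One could bypass both steps by invoking that van den Bergh's correspondence underlies an equivalence of categories matching sub-representations with Ulrich subsheaves, but keeping the argument at the level of presentation matrices seems preferable and is essentially no more work.
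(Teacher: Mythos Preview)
Your argument is correct, and it runs parallel to the paper's but with different machinery at the two key junctures.  For the step ``a destabilizing subsheaf of an Ulrich bundle is again Ulrich,'' the paper proves a separate lemma (Lemma~\ref{cliffdestab}) by taking a Jordan--H\"older filtration of $\mathcal{E}$, pushing it forward, refining it to a Jordan--H\"older filtration of $\mathcal{O}_{\mathbb{P}^{n-1}}^{dr}$, and then using $\mathrm{Ext}^{1}(\mathcal{O},\mathcal{O})=0$ to rebuild $\pi_{\ast}\mathcal{E}_{1}$ as a trivial bundle; your use of the abelian category of semistable sheaves with fixed reduced Hilbert polynomial, in which $\mathcal{O}_{\mathbb{P}^{n-1}}^{dr}$ is semisimple and hence every subobject splits off, is a cleaner packaging of the same phenomenon.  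For the passage from an Ulrich subbundle $\mathcal{F}\subset\mathcal{E}$ back to an invariant subspace of $k^{dr}$, the paper simply asserts that $\mathcal{F}$ ``corresponds to a subrepresentation through which $\phi_{\mathcal{E}}$ factors,'' implicitly invoking the functoriality of van den Bergh's correspondence; your horseshoe-plus-uniqueness-of-minimal-resolutions argument makes this explicit at the level of presentation matrices, and the $w$-coefficient comparison you allude to does indeed force $P$ and $Q$ to differ by a unipotent block matrix, so that conjugation by $P$ simultaneously upper-triangularizes the $A_i$.  The trade-off is that the paper's route isolates Lemma~\ref{cliffdestab} and its consequence Corollary~\ref{assgrad} (the associated graded of any Jordan--H\"older filtration is a sum of stable Ulrich bundles) as standalone results, whereas your approach is more streamlined for the proposition at hand and entirely self-contained on the matrix side.
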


It will be important to know that the Ulrich property is well behaved in short exact sequences (Proposition \ref{extcliff}). First, we need a lemma.

\begin{lem}
\label{locfree}
Let $g:Y \rightarrow Z$ be a finite flat surjective  morphism of smooth projective varieties, and let $\mathcal{G}$ be a coherent sheaf on $Y$ such that $g_{\ast}\mathcal{G}$ is locally free.  Then $\mathcal{G}$ is locally free.
\end{lem}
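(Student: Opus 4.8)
The plan is to reduce to a statement in local commutative algebra and then apply the Auslander--Buchsbaum formula. Since the locus on $Y$ where a coherent sheaf fails to be locally free is closed and $Y$ is Jacobson, it suffices to show that $\mathcal{G}$ is free at every closed point of $Y$. So I would localize: after replacing $Z$ by a suitable affine open and localizing at a closed point, assume $Z=\operatorname{Spec}A$ with $(A,\mathfrak{m})$ a regular local ring (because $Z$ is smooth) and $Y=\operatorname{Spec}B$ with $B$ a finite $A$-algebra. Then $B$ is semilocal, its maximal ideals $\mathfrak{n}_1,\dots,\mathfrak{n}_s$ all lie over $\mathfrak{m}$ (a prime of $B$ is maximal iff its contraction to $A$ is, by integrality), and each localization $B_{\mathfrak{n}_j}$ is a regular local ring (because $Y$ is smooth). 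Let $M$ be the finite $B$-module corresponding to $\mathcal{G}$. The hypothesis that $g_{\ast}\mathcal{G}$ is locally free says precisely that $M$ is free as an $A$-module, and the goal becomes: show that each $M_{\mathfrak{n}_j}$ is free over $B_{\mathfrak{n}_j}$.

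The heart of the argument is a depth computation. On one hand, $M$ being $A$-free and $A$ being regular (hence Cohen--Macaulay) gives $\operatorname{depth}_A M=\operatorname{depth}A=\dim A$. On the other hand, local cohomology with support in $\mathfrak{m}$ is computed the same over $A$ as over $B$, since the $A$-module structure on $M$ factors through $B$; and because $B/\mathfrak{m}B$ is Artinian, $\mathfrak{m}B$ has the same radical as $\mathfrak{n}_1\cap\cdots\cap\mathfrak{n}_s$, which cuts out finitely many distinct closed points of $Y$, so that $H^i_{\mathfrak{m}}(M)\cong\bigoplus_j H^i_{\mathfrak{n}_jB_{\mathfrak{n}_j}}(M_{\mathfrak{n}_j})$ for every $i$. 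Comparing the least $i$ with nonvanishing local cohomology on the two sides yields $\operatorname{depth}_A M=\min_j\operatorname{depth}_{B_{\mathfrak{n}_j}}M_{\mathfrak{n}_j}$, and hence $\operatorname{depth}_{B_{\mathfrak{n}_j}}M_{\mathfrak{n}_j}\geq\dim A$ for every $j$.

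To finish, observe that $\dim B_{\mathfrak{n}_j}=\dim Y=\dim Z=\dim A$, using that $Y$ and $Z$ are varieties (so all their local rings at closed points have the expected dimension) and that $g$, being finite and surjective, preserves dimension. Since one always has $\operatorname{depth}_{B_{\mathfrak{n}_j}}M_{\mathfrak{n}_j}\leq\dim B_{\mathfrak{n}_j}$, the previous paragraph forces $\operatorname{depth}_{B_{\mathfrak{n}_j}}M_{\mathfrak{n}_j}=\dim B_{\mathfrak{n}_j}$. As $B_{\mathfrak{n}_j}$ is regular local, $M_{\mathfrak{n}_j}$ has finite projective dimension, so the Auslander--Buchsbaum formula gives $\operatorname{pd}_{B_{\mathfrak{n}_j}}M_{\mathfrak{n}_j}=\dim B_{\mathfrak{n}_j}-\operatorname{depth}_{B_{\mathfrak{n}_j}}M_{\mathfrak{n}_j}=0$, i.e. $M_{\mathfrak{n}_j}$ is free. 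The one point that needs genuine care is the depth comparison in the second paragraph: since the fiber ring $B/\mathfrak{m}B$ is only semilocal rather than local, one cannot simply treat $A\to B$ as a map of local rings, and must instead use the decomposition of local cohomology over the several points $\mathfrak{n}_j$. Everything else is a routine invocation of standard properties of regular local rings.
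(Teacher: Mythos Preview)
Your argument is correct. Both you and the paper reduce to a local statement---a finite extension $A\to B$ with $A$ regular local, a finite $B$-module $M$ that is free over $A$---but the routes diverge from there. The paper invokes the change-of-rings spectral sequence
\[
\textnormal{Ext}^i_S\bigl(M,\textnormal{Ext}^j_R(S,R)\bigr)\;\Rightarrow\;\textnormal{Ext}^{i+j}_R(M,R),
\]
uses flatness (so $S$ is $R$-free and the inner Ext collapses to $\textnormal{Hom}_R(S,R)\cong S$), and reads off $\textnormal{Ext}^i_S(M,S)=0$ for $i>0$, whence $M$ is free over the regular ring $S$. Your approach instead transfers depth: $A$-freeness gives $\textnormal{depth}_A M=\dim A$, the local-cohomology decomposition over the finitely many closed points of the fiber pushes this up to $\textnormal{depth}_{B_{\mathfrak n_j}}M_{\mathfrak n_j}\ge\dim A=\dim B_{\mathfrak n_j}$, and Auslander--Buchsbaum over the regular local ring $B_{\mathfrak n_j}$ finishes. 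Your version avoids spectral sequences, does not actually use the flatness hypothesis (which is in any case automatic here by miracle flatness), and is more explicit about the semilocal nature of $B$, a point the paper elides by writing ``a finite flat morphism of regular local rings $R\to S$'' as if $S$ were already local. The paper's argument is a touch slicker once one is comfortable with the spectral sequence and the identification of the relative dualizing module; yours is more self-contained.
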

\begin{proof}
To show that $\mathcal{G}$ is locally free, we show that the stalks this sheaf are free modules over the local ring at any point. So translating the hypotheses into the local situation, we have a finite flat morphism of regular local rings $R \rightarrow S$ and a  finite $S$-module $M$ such that $M$ as an $R$-module is locally free of finite rank.  Thus $\displaystyle {\rm Ext}^i_R(M, R) = 0 \ \textnormal{for any} \  i > 0$ and $\displaystyle {\rm Hom}_R(M, R) \cong S$ as an $S$-module. We also have the change of rings spectral sequence:
\[
\displaystyle {\rm Ext}^i_S(M, {\rm Ext}^j_R(S, R)) \Rightarrow {\rm Ext}^{i + j}_R(M, R).
\]
\noindent The degeneration of this spectral sequence gives the isomorphism
\[
\displaystyle {\rm Ext}^i_S(M, S) \cong  {\rm Ext}^{i }_R(M, R) = 0
\]
\noindent for any $i >0$. So $M$ is a free $R$ module.

\end{proof}

\begin{prop}
\label{extcliff}
Consider the following short exact sequence of coherent sheaves on $X$:
\begin{equation}
0 \rightarrow \mathcal{F} \rightarrow \mathcal{E} \rightarrow \mathcal{G} \rightarrow 0
\end{equation}
If any two of $\mathcal{F}$, $\mathcal{E}$, and $\mathcal{G}$ are Ulrich bundles, then so is the third.
\end{prop}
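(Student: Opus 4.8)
The plan is to apply the pushforward $\pi_\ast$ along a finite linear projection $\pi : X\to\mathbb{P}^{n-1}$, exploiting the fact that the Ulrich condition for a rank-$r$ bundle $\mathcal{H}$ on $X$ says precisely that $\pi_\ast\mathcal{H}\cong\mathcal{O}_{\mathbb{P}^{n-1}}^{dr}$. Since $\pi$ is finite, hence affine, the functor $\pi_\ast$ is exact, so the given short exact sequence yields a short exact sequence of coherent sheaves on $\mathbb{P}^{n-1}$,
\[
0\to\pi_\ast\mathcal{F}\to\pi_\ast\mathcal{E}\to\pi_\ast\mathcal{G}\to 0,
\]
in which, by hypothesis, two of the three terms are trivial vector bundles.

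The key point is then the elementary claim that in a short exact sequence $0\to\mathcal{A}\to\mathcal{B}\to\mathcal{C}\to 0$ of coherent sheaves on $\mathbb{P}^m$, if two of $\mathcal{A},\mathcal{B},\mathcal{C}$ are trivial vector bundles then so is the third. I would verify this in the three cases. If $\mathcal{A}$ and $\mathcal{C}$ are trivial, the extension class lies in $\mathrm{Ext}^1(\mathcal{C},\mathcal{A})$, a direct sum of copies of $H^1(\mathbb{P}^m,\mathcal{O}_{\mathbb{P}^m})=0$, so the sequence splits and $\mathcal{B}$ is trivial. If $\mathcal{A}=\mathcal{O}^a$ and $\mathcal{B}=\mathcal{O}^b$ are trivial, then since $H^0(\mathbb{P}^m,\mathcal{O}_{\mathbb{P}^m})=k$ the inclusion is induced by a constant matrix $M\in\mathrm{Mat}_{b\times a}(k)$; injectivity of the map of sheaves forces $\mathrm{rank}(M)=a$, and because $\mathcal{O}_{\mathbb{P}^m}\otimes_k-$ is exact we get $\mathcal{C}\cong\mathcal{O}_{\mathbb{P}^m}\otimes_k\mathrm{coker}(M)\cong\mathcal{O}^{\,b-a}$. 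The case where $\mathcal{B}$ and $\mathcal{C}$ are trivial is symmetric: the surjection is a constant matrix of full rank, and its kernel is again a trivial bundle.

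Applying the claim to the pushed-forward sequence shows that $\pi_\ast\mathcal{H}$ is a trivial vector bundle, where $\mathcal{H}$ denotes whichever of $\mathcal{F},\mathcal{E},\mathcal{G}$ was not among the two assumed Ulrich; in particular $\pi_\ast\mathcal{H}$ is locally free. Now $\pi : X\to\mathbb{P}^{n-1}$ is a finite flat surjective morphism of smooth projective varieties (finite by the choice of projection center off $X$, surjective since $X$ is irreducible of dimension $n-1$, and flat by ``miracle flatness'' since $X$ is Cohen--Macaulay and $\mathbb{P}^{n-1}$ is regular), so Lemma \ref{locfree} shows that $\mathcal{H}$ is itself locally free. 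Finally, because $\pi$ has degree $d$ we have $\mathrm{rank}(\pi_\ast\mathcal{H})=d\cdot\mathrm{rank}(\mathcal{H})$, hence $\pi_\ast\mathcal{H}\cong\mathcal{O}_{\mathbb{P}^{n-1}}^{d\,\mathrm{rank}(\mathcal{H})}$, which is exactly the statement that $\mathcal{H}$ is Ulrich.

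The argument is short and largely formal; its one genuinely substantive ingredient is Lemma \ref{locfree}, which is what lets us pass from ``$\pi_\ast\mathcal{H}$ is locally free'' to ``$\mathcal{H}$ is locally free'' — without it, the pushforward computation would only identify a coherent sheaf, whereas ``Ulrich bundle'' is a statement about a vector bundle. The remaining inputs (exactness of $\pi_\ast$, the description of homomorphisms between trivial bundles on projective space, and the vanishing of $H^1(\mathbb{P}^m,\mathcal{O})$) are routine.
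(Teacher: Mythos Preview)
Your proof is correct and follows essentially the same approach as the paper: push forward along $\pi$, reduce to the statement that in a short exact sequence of coherent sheaves on $\mathbb{P}^{n-1}$ with two terms trivial the third is trivial, and then invoke Lemma~\ref{locfree} to recover local freeness on $X$. The only cosmetic difference is that the paper handles the three cases with slightly different ad hoc arguments (dualizing and counting sections rather than your constant-matrix reasoning) and invokes Lemma~\ref{locfree} only in the one case where local freeness of the remaining sheaf is not automatic, whereas you apply it uniformly; both routes are fine.
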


\begin{proof}
Let $f, e$, and $e-f$ be the respective ranks of $\mathcal{F},$ $\mathcal{E},$ and $\mathcal{G}.$  Since $\pi$ is a finite morphism, we have the following exact sequence of sheaves on $\mathbb{P}^{n-1}$:
\begin{equation}
\label{cliffpush}
0 \rightarrow \pi_{\ast}\mathcal{F} \rightarrow \pi_{\ast}\mathcal{E} \rightarrow \pi_{\ast}\mathcal{G} \rightarrow 0.
\end{equation}
If $\mathcal{F}$ and $\mathcal{G}$ are Ulrich bundles, then $\pi_{\ast}\mathcal{F}$ and $\pi_{\ast}\mathcal{G}$ are trivial vector bundles on $\mathbb{P}^{n-1}.$  Therefore $\pi_{\ast}\mathcal{E}$, being an extension of trivial vector bundles on $\mathbb{P}^{n-1}$, is also trivial, so that $\mathcal{E}$ is Ulrich.

If $\mathcal{E}$ and $\mathcal{G}$ are Ulrich bundles, then $\mathcal{F}$ is locally free.  By definition $\pi_{\ast}\mathcal{E}$ and $\pi_{\ast}\mathcal{G}$ are trivial, so dualizing (\ref{cliffpush}) yields the exact sequence
\begin{equation}
0 \rightarrow \mathcal{O}_{\mathbb{P}^{n-1}}^{d(e-f)} \rightarrow \mathcal{O}_{\mathbb{P}^{n-1}}^{de} \rightarrow (\pi_{\ast}\mathcal{F})^{\vee}\rightarrow 0
\end{equation}
It follows from taking cohomology that $(\pi_{\ast}\mathcal{F})^{\vee}$ is a globally generated vector bundle of rank $df$ on $\mathbb{P}^{n-1}$ with exactly $df$ global sections, so it must be trivial.  In particular, $\pi_{\ast}\mathcal{F} \cong \mathcal{O}_{\mathbb{P}^{n-1}}^{dr}$, i.e. $\mathcal{F}$ is Ulrich.

Finally, if $\mathcal{F}$ and $\mathcal{E}$ are Ulrich bundles, then $\mathcal{G}$ is torsion-free, and arguing as before, the fact that $\pi_{\ast}\mathcal{G}$ is a globally generated torsion-free sheaf of rank $d(e-f)$ on $\mathbb{P}^{n-1}$ with exactly $d(e-f)$ global sections implies that $\pi_{\ast}\mathcal{G}$ is trivial.  Lemma \ref{locfree} then implies that $\mathcal{G}$ is locally free, hence an Ulrich bundle.
\end{proof}

\begin{lem}
\label{cliffdestab}
Let $\mathcal{E}$ be an Ulrich bundle on $X.$  Then for any Jordan-H\"{o}lder filtration
\begin{equation}
\label{jhfil}
 0=\mathcal{E}_0 \subseteq \mathcal{E}_1 \subseteq \cdots \subseteq \mathcal{E}_{m-1} \subseteq \mathcal{E}_m =
 \mathcal{E}
\end{equation}
we have that $\mathcal{E}_{i}$ is an Ulrich bundle for $1 \leq i \leq m.$

In particular, if $\mathcal{E}$ is a strictly semistable Ulrich bundle of rank $r \geq 2,$ then there exists a subbundle $\mathcal{F}$ of $\mathcal{E}$ having rank $s < r$ which is Ulrich.
\end{lem}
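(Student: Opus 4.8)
The plan is to reduce the statement, via Proposition~\ref{extcliff}, to an elementary fact about globally generated sheaves on $\mathbb{P}^{n-1}$. Since $\mathcal{E}$ is Ulrich it is semistable (Proposition~\ref{cliffgiesstab}), so the graded pieces of the filtration (\ref{jhfil}) are stable with reduced Hilbert polynomial $p(\mathcal{E})$; summing $H_{\mathcal{E}_i}=\sum_{j\le i}H_{\mathcal{E}_j/\mathcal{E}_{j-1}}$ over the filtration then gives $p(\mathcal{E}_i)=p(\mathcal{E})$ for every $i$, and hence $p(\mathcal{Q}_i)=p(\mathcal{E})$ for $\mathcal{Q}_i:=\mathcal{E}/\mathcal{E}_i$. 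The key point is that each $\mathcal{Q}_i$ is again Ulrich: granting this, Proposition~\ref{extcliff} applied to $0\to\mathcal{E}_i\to\mathcal{E}\to\mathcal{Q}_i\to 0$ shows $\mathcal{E}_i$ is Ulrich, and the ``in particular'' clause is the case $\mathcal{F}=\mathcal{E}_1$, which has rank $s$ with $1\le s<r$ when $\mathcal{E}$ is strictly semistable of rank $\ge 2$ (the filtration then has length $\ge 2$), and which is a genuine subbundle since both it and $\mathcal{E}/\mathcal{E}_1$ are locally free.

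To show $\mathcal{Q}_i$ is Ulrich I would push $0\to\mathcal{E}_i\to\mathcal{E}\to\mathcal{Q}_i\to 0$ forward along $\pi$. As $\pi$ is finite, $\pi_\ast$ is exact, so $\pi_\ast\mathcal{Q}_i$ is a quotient of $\pi_\ast\mathcal{E}\cong\mathcal{O}_{\mathbb{P}^{n-1}}^{dr}$, hence globally generated; and since $\pi$ is finite it preserves Euler characteristics, so (using $p(\mathcal{Q}_i)=p(\mathcal{E})$ together with $\chi(\pi_\ast\mathcal{E}(t))=dr\binom{t+n-1}{n-1}$) the Hilbert polynomial of $\pi_\ast\mathcal{Q}_i$ is exactly that of $\mathcal{O}_{\mathbb{P}^{n-1}}^{d(r-\operatorname{rank}\mathcal{E}_i)}$. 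So everything comes down to the following claim: a globally generated coherent sheaf $\mathcal{B}$ on $\mathbb{P}^m$ whose Hilbert polynomial equals that of $\mathcal{O}_{\mathbb{P}^m}^a$, where $a=\operatorname{rank}\mathcal{B}$, is isomorphic to $\mathcal{O}_{\mathbb{P}^m}^a$. Once this is established, $\pi_\ast\mathcal{Q}_i$ is trivial, whence $\mathcal{Q}_i$ is locally free by Lemma~\ref{locfree} and therefore Ulrich.

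To prove the claim I would choose $a$ general global sections of $\mathcal{B}$; because $\mathcal{B}$ is globally generated they span $\mathcal{B}$ at the generic point, so the induced map $\varphi:\mathcal{O}_{\mathbb{P}^m}^a\to\mathcal{B}$ is generically an isomorphism, hence injective with torsion cokernel $\mathcal{T}$. Then $\mathcal{T}$ has Hilbert polynomial $H_{\mathcal{B}}-H_{\mathcal{O}^a}=0$, which forces $\mathcal{T}=0$ because the Hilbert polynomial of a nonzero torsion sheaf has positive leading coefficient; thus $\varphi$ is an isomorphism. I expect the one genuinely delicate point to be this last step — precisely, the verification that general global sections of a globally generated sheaf of rank $a$ span its $a$-dimensional stalk at the generic point, so that $\varphi$ really is generically an isomorphism; the rest is bookkeeping with Hilbert polynomials together with Propositions~\ref{cliffgiesstab} and~\ref{extcliff} and Lemma~\ref{locfree}.
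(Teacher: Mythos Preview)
Your argument is correct, and it takes a route that is dual to the paper's. Both proofs push the filtration forward along $\pi$ and reduce to showing that a certain coherent sheaf on $\mathbb{P}^{n-1}$ is trivial, but they attack opposite ends of the sequence: the paper proves first that the \emph{subsheaf} $\pi_\ast\mathcal{E}_1$ is trivial, whereas you prove first that the \emph{quotient} $\pi_\ast\mathcal{Q}_i$ is trivial. The paper's mechanism is to give $\pi_\ast\mathcal{E}_1$ a Jordan--H\"older filtration, extend it to one of $\mathcal{O}_{\mathbb{P}^{n-1}}^{dr}$, invoke uniqueness of the JH graded to see that every factor is $\mathcal{O}_{\mathbb{P}^{n-1}}$, and then use $\mathrm{Ext}^1(\mathcal{O},\mathcal{O})=0$ to split the filtration; it then inducts via Proposition~\ref{extcliff}. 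Your mechanism exploits the simple fact that a quotient of a trivial bundle is globally generated, and then argues directly via Hilbert polynomials that a globally generated sheaf of rank $a$ with the Hilbert polynomial of $\mathcal{O}^a$ must be $\mathcal{O}^a$; this is essentially the same trick the paper already uses in the last case of Proposition~\ref{extcliff}. What your approach buys is that it sidesteps the JH machinery on $\mathbb{P}^{n-1}$ and the $\mathrm{Ext}^1$ computation entirely; what the paper's approach buys is a slightly more structural picture (it explicitly identifies the JH factors of $\pi_\ast\mathcal{E}_1$). The step you flag as delicate---that $a$ general sections of a globally generated rank-$a$ sheaf are independent at the generic point---is standard: surjectivity of $H^0(\mathcal{B})\otimes k(\eta)\to\mathcal{B}_\eta$ makes independence a nonempty open condition on $a$-tuples.
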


\begin{proof}
Fix a Jordan-H\"{o}lder filtration of $\mathcal{E}$ as in (\ref{jhfil}).  For $i=1,\cdots,m$, the sheaf $\mathcal{E}_{i}$ is torsion-free, and the quotient sheaf $\mathcal{E}_i / \mathcal{E}_{i-1}$ is both torsion-free and stable with
$p(\mathcal{E}_i / \mathcal{E}_{i-1})=p(\mathcal{E})=d \binom{t+n-1}{n-1}$.  To prove the lemma, it suffices to show that $\mathcal{E}_{1}$ is an Ulrich bundle.  Indeed, if $\mathcal{E}_{1}$ is an Ulrich bundle, then Proposition \ref{extcliff} implies that $\mathcal{E}/\mathcal{E}_{1}$ is also an Ulrich bundle, and the desired result follows by induction on the rank of $\mathcal{E}.$

Since $p(\pi_*
\mathcal{E}_1)=\frac{p(\mathcal{E}_1)}{d}=\binom{t+n-1}{n-1}$ and $p(\pi_*
\mathcal{E})=\frac{p(\mathcal{E})}{d}=\binom{t+n-1}{n-1}$, we have that $\pi_* \mathcal{E}_1$ is a destabilizing
subsheaf of $\pi_* \mathcal{E} \cong \mathcal{O}_{\mathbb{P}^{n-1}}^{dr}$ having the same reduced Hilbert polynomial as $\pi_{\ast}\mathcal{E}.$  It then follows from the semistability of $\mathcal{O}_{\mathbb{P}^{n-1}}^{dr}$ that $\pi_* \mathcal{E}_1$ is a
semistable sheaf on $\mathbb{P}^{n-1},$ so it admits a Jordan-H\"{o}lder filtration
\begin{equation}
\label{jh-g}
 0=\mathcal{G}_0 \subseteq \mathcal{G}_1 \subseteq \cdots \subseteq \mathcal{G}_{s-1} \subseteq \mathcal{G}_s = \pi_*
 \mathcal{E}_1.
\end{equation}
Now consider the short exact sequence
\begin{equation*}
 0 \to \mathcal{E}_1 \to \mathcal{E} \to \mathcal{E}/\mathcal{E}_1 \to 0,
\end{equation*}
and its pushforward
\begin{equation}\label{eq-jh}
 0 \to \pi_* \mathcal{E}_1 \to \pi_* \mathcal{E} \to \pi_* \mathcal{E}/ \pi_* \mathcal{E}_1 \to 0.
\end{equation}
Since $\pi_* \mathcal{E}$ and $\pi_* \mathcal{E}/ \pi_* \mathcal{E}_1$ have the same reduced Hilbert polynomial,
namely $\binom{t+n-1}{n-1}$, and $\pi_* \mathcal{E}$ is semistable, we have that $\pi_* \mathcal{E}/ \pi_*
\mathcal{E}_1$ is also semistable. Hence we may concatenate (\ref{jh-g}) with the Jordan-H\"{o}lder filtration of $\pi_* \mathcal{E}/ \pi_*
\mathcal{E}_1$ to obtain a Jordan-H\"{o}lder filtration of $\pi_*
\mathcal{E} \cong \mathcal{O}_{\mathbb{P}^{n-1}}^{dr}$. But the trivial filtration $0 \subseteq
\mathcal{O}_{\mathbb{P}^{n-1}} \subseteq \mathcal{O}_{\mathbb{P}^{n-1}}^{2} \subseteq \cdots \subseteq
\mathcal{O}_{\mathbb{P}^{n-1}}^{dr}$ is also Jordan-H\"{o}lder, so we have that for $j=1,\cdots,s$ the successive
quotients $\mathcal{G}_j / \mathcal{G}_{j-1}$ of the filtration (\ref{jh-g}) are isomorphic to $\mathcal{O}_{\mathbb{P}^{n-1}}$.

In particular, $\mathcal{G}_{1} \cong \mathcal{O}_{\mathbb{P}^{n-1}}$ and $\mathcal{G}_{2}/\mathcal{G}_{1} \cong
\mathcal{O}_{\mathbb{P}^{n-1}}.$  Therefore $\mathcal{G}_{2}$ is an extension of $\mathcal{O}_{\mathbb{P}^{n-1}}$ by
$\mathcal{O}_{\mathbb{P}^{n-1}}$, and since
$\textnormal{Ext}^{1}(\mathcal{O}_{\mathbb{P}^{n-1}},\mathcal{O}_{\mathbb{P}^{n-1}})=0$, this extension is split, so
that $\mathcal{G}_{2} \cong \mathcal{O}_{\mathbb{P}^{n-1}}^{2}.$  Continuing in this fashion, we see that
$\mathcal{G}_{s}=\pi_{\ast}\mathcal{E}_{1}$ is an extension of a trivial bundle by
$\mathcal{O}_{\mathbb{P}^{n-1}},$ so that $\pi_{\ast}\mathcal{E}_{1}$ is trivial.  By Lemma \ref{locfree}, $\mathcal{E}_1$
is locally free, so we may conclude that $\mathcal{E}_{1}$ is an Ulrich bundle.
\end{proof}

Combining this result with Proposition \ref{extcliff} yields the following

\begin{cor}
\label{assgrad}
If $\mathcal{E}$ is an Ulrich bundle on $X$, then the associated graded bundle of any Jordan-H\"{o}lder filtration of $\mathcal{E}$ is a direct sum of stable Ulrich bundles on $X$. \hfill \qedsymbol
\end{cor}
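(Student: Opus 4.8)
The plan is to read the statement off directly from Lemma~\ref{cliffdestab} and Proposition~\ref{extcliff}; no new ideas are needed. First I would fix a Jordan-H\"older filtration
\begin{equation*}
0=\mathcal{E}_0 \subseteq \mathcal{E}_1 \subseteq \cdots \subseteq \mathcal{E}_m=\mathcal{E}
\end{equation*}
of $\mathcal{E}$ as in~(\ref{jhfil}). Such a filtration exists because $\mathcal{E}$ is semistable by Proposition~\ref{cliffgiesstab}, and by Lemma~\ref{cliffdestab} every intermediate sheaf $\mathcal{E}_i$, for $1 \leq i \leq m$, is then an Ulrich bundle; in particular each $\mathcal{E}_i$ is locally free.

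Next, for each $i$ I would apply Proposition~\ref{extcliff} to the short exact sequence
\begin{equation*}
0 \to \mathcal{E}_{i-1} \to \mathcal{E}_i \to \mathcal{E}_i/\mathcal{E}_{i-1} \to 0.
\end{equation*}
Since two of its three terms, namely $\mathcal{E}_{i-1}$ and $\mathcal{E}_i$, are Ulrich bundles, Proposition~\ref{extcliff} gives that the successive quotient $\mathcal{E}_i/\mathcal{E}_{i-1}$ is an Ulrich bundle as well. On the other hand, by the definition of a Jordan-H\"older filtration of the semistable sheaf $\mathcal{E}$, each $\mathcal{E}_i/\mathcal{E}_{i-1}$ is stable, with reduced Hilbert polynomial equal to $p(\mathcal{E})$. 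Hence the associated graded bundle $\bigoplus_{i=1}^m \mathcal{E}_i/\mathcal{E}_{i-1}$ is a direct sum of stable Ulrich bundles, which is exactly the assertion of the corollary.

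I do not expect any genuine obstacle here: the corollary is a formal bookkeeping consequence of the two results it follows, and the one point worth keeping in mind is that one must know each $\mathcal{E}_i$ is locally free in order to invoke Proposition~\ref{extcliff} with its ``Ulrich bundle'' (rather than merely ``Ulrich sheaf'') hypothesis — and that is precisely what Lemma~\ref{cliffdestab} supplies. If desired one could also note that the associated graded is independent of the chosen filtration up to isomorphism, but this is standard Jordan-H\"older theory and is not needed for the statement as phrased.
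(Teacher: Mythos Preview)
Your argument is correct and is exactly the approach the paper intends: the corollary is stated without proof as an immediate consequence of Lemma~\ref{cliffdestab} and Proposition~\ref{extcliff}, and your write-up simply spells out that combination.
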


\textit{Proof of Proposition \ref{lem-irred-stable}:}  Let $\mathcal{E}$ be an Ulrich bundle corresponding to a reducible representation of $C_{f}$. We claim that $\mathcal{E}$ is strictly semi-stable. Indeed, reducibility implies that we may choose a proper, nontrivial subbundle $\mathcal{F}$ of $\mathcal{E}$ corresponding to a proper, nontrivial subrepresentation of $C_{f}.$  Since $\mathcal{F}$ is an Ulrich bundle, the reduced Hilbert polynomials $p(\mathcal{E})$ and $p(\mathcal{F})$ are equal.

Conversely, let $\mathcal{E}$ be a strictly semistable Ulrich bundle of rank $r$, and let $\phi_{\mathcal{E}}:C_{f} \rightarrow \textnormal{Mat}_{dr}(k)$ be the corresponding representation.  By Lemma \ref{cliffdestab} we have that $\mathcal{E}$ admits a destabilizing subbundle $\mathcal{F}$ of rank $s < r$ which is Ulrich; this corresponds to a subrepresentation $\phi_{\mathcal{F}}:C_{f} \rightarrow \textnormal{Mat}_{ds}(k)$ through which $\phi_{\mathcal{E}}$ factors.  It then follows that $\phi_{\mathcal{E}}(C_{f})$ cannot generate all of $\textnormal{Mat}_{dr}(k),$ so $\phi_{\mathcal{E}}$ is reducible. \hfill \qedsymbol

\begin{prop}
\label{irrexist}
If $C_f$ admits a representation, then it admits an irreducible representation.
\end{prop}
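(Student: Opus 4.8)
The plan is to pass to the geometric side of van den Bergh's correspondence and extract a stable Ulrich bundle from an arbitrary one. We may assume $n \geq 2$: when $n = 1$ the form is $f = cx_1^d$ with $c \neq 0$, so $C_f \cong k[y_1]/(y_1^d - c)$ is a finite-dimensional commutative semisimple $k$-algebra (since $\mathrm{char}\,k = 0$ and $k = \bar k$), hence a product of copies of $k$, each projection being a one-dimensional irreducible representation. So the content of the statement lies in the case $n \geq 2$, where Proposition \ref{lem-irred-stable} is available.

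Assume then that $n \geq 2$ and that $C_f$ admits a representation, necessarily of dimension $dr$ for some $r \geq 1$ by Proposition \ref{divbyd}. By Proposition \ref{vdbcor} this representation corresponds to an Ulrich bundle $\mathcal{E}$ of rank $r$ on $X_f$. By Proposition \ref{cliffgiesstab}, $\mathcal{E}$ is semistable, so it admits a Jordan-H\"older filtration $0 = \mathcal{E}_0 \subseteq \mathcal{E}_1 \subseteq \cdots \subseteq \mathcal{E}_m = \mathcal{E}$ with $m \geq 1$ (here $m = 1$ exactly when $\mathcal{E}$ is already stable, in which case it itself corresponds to an irreducible representation and we are done).

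By Corollary \ref{assgrad}, the associated graded bundle $\bigoplus_{i} \mathcal{E}_i/\mathcal{E}_{i-1}$ is a direct sum of stable Ulrich bundles on $X_f$. Since $\mathcal{E} \neq 0$ the filtration is nontrivial, so $\mathcal{G} := \mathcal{E}_1 = \mathcal{E}_1/\mathcal{E}_0$ is a nonzero stable Ulrich bundle on $X_f$. Applying Proposition \ref{lem-irred-stable} to $\mathcal{G}$, the representation of $C_f$ corresponding to $\mathcal{G}$ is irreducible, which proves the proposition.

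There is no serious obstacle remaining at this stage: the substantive work has already been carried out in Proposition \ref{extcliff} and Lemma \ref{cliffdestab}, which together guarantee that the Ulrich property propagates through both the sub-objects and the successive quotients of a Jordan-H\"older filtration — in particular that those quotients are again locally free, not merely torsion-free. The only point worth double-checking is that the Gieseker-Maruyama formalism genuinely applies to $\mathcal{E}$, i.e. that the representation-theoretic data produces an honest vector bundle (torsion-free sheaf) on the smooth variety $X_f$; this is exactly what the discussion preceding Proposition \ref{vdbcor}, together with Lemma \ref{locfree}, provides.
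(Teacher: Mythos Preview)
Your proof is correct and follows essentially the same route as the paper: pass to the Ulrich bundle $\mathcal{E}$ via van den Bergh's correspondence, take a Jordan--H\"older filtration, and extract a stable Ulrich piece to which Proposition~\ref{lem-irred-stable} applies. The paper invokes Lemma~\ref{cliffdestab} directly (using that $\mathcal{E}_1$ is stable by definition of a Jordan--H\"older filtration and Ulrich by that lemma), whereas you cite the repackaged Corollary~\ref{assgrad}; the underlying machinery is identical, and your separate treatment of $n=1$ is harmless extra care.
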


\begin{proof}
Let $\phi:C_{f} \rightarrow {\rm Mat}_{dr}(k)$ be a representation corresponding to an Ulrich bundle $\mathcal{E}$ of rank $r$ on $X.$  If $\mathcal{E}$ is stable, then we are done by Proposition \ref{lem-irred-stable}.  If $\mathcal{E}$ is strictly semistable, then we may look to the stable Ulrich subbundle $\mathcal{F}$ guaranteed by Lemma \ref{cliffdestab}; another application of Proposition \ref{lem-irred-stable} shows that the representation corresponding to $\mathcal{F}$ is irreducible.
\end{proof}

We conclude this section with a major existence result, which is the Theorem stated after ``STOP PRESS" in \cite{BHS}.  It should be noted that even though most hypersurfaces of degree $d \geq 3$ are not of the form $X_f,$ the proof makes serious use of generalized Clifford algebras.

\begin{thm}
\label{ulrichhyp}
(Backelin-Herzog-Sanders)  Every smooth hypersurface $X$ admits an Ulrich bundle.  \hfill \qedsymbol
\end{thm}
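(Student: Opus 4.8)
The plan is to deduce Theorem~\ref{ulrichhyp} from the purely algebraic statement --- the real content of the Backelin--Herzog--Sanders argument --- that the generalized Clifford algebra of \emph{every} homogeneous form admits a finite--dimensional representation. Write $X = V(G) \subseteq \mathbb{P}^n$ with $G$ homogeneous of degree $d$, regarded now as a form in all $n+1$ ambient coordinates $z_0, \dots, z_n$. A representation of $C_G$ furnishes matrices $A_0, \dots, A_n \in \mathrm{Mat}_{dr}(k)$ --- the dimension being $dr$ by the argument of Proposition~\ref{divbyd} --- with $M^d = G \cdot I_{dr}$ for $M := \sum_i z_i A_i$, and taking determinants forces $\det M = \zeta\, G^{r}$ with $\zeta \in k^{\times}$. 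Then $(M, M^{d-1})$ is a matrix factorization of $G$, so in the exact sequence $0 \to \mathcal{O}_{\mathbb{P}^n}(-1)^{dr} \xrightarrow{M} \mathcal{O}_{\mathbb{P}^n}^{dr} \to \mathcal{E} \to 0$ (exact because $\det M \ne 0$) the cokernel $\mathcal{E}$ is annihilated by $G$, hence an $\mathcal{O}_X$-module of projective dimension $\le 1$ over $\mathcal{O}_{\mathbb{P}^n}$. Since $X$ is smooth, an Auslander--Buchsbaum count shows each stalk $\mathcal{E}_x$ is a maximal Cohen--Macaulay module over the regular local ring $\mathcal{O}_{X,x}$, hence free; so $\mathcal{E}$ is locally free, of rank $r$ because $\mathrm{Fitt}_0(\mathcal{E}) = (\det M) = (G^{r})$. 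As $M$ has all entries in the irrelevant ideal, the sequence above is a minimal graded free resolution of the shape in Proposition~\ref{ulrichacm}(iii), so $\mathcal{E}$ is Ulrich (when $\dim X = 1$ one instead reads off $h^0(\mathcal{E}) = dr$ and $h^0(\mathcal{E}(-1)) = 0$, whence $\mathcal{E}$ is Ulrich). It therefore suffices to produce a representation of $C_G$.

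I would construct one by induction on the number of variables, writing the matrices $A_i$ down explicitly. The base case is a one-variable form $G = cz^d$, for which the companion matrix of $T^d - c$ is a representation. For the inductive step, since $C_G$ is unchanged up to isomorphism under an invertible linear substitution in its generators, a general linear change of coordinates lets us assume $G$ is monic of degree $d$ in the last variable,
\[
G = z_n^d + p_1 z_n^{d-1} + \cdots + p_d,
\]
with each $p_j$ homogeneous of degree $j$ in $z_0,\dots,z_{n-1}$. Multiplication by the class of $z_n$ on $k[z_0,\dots,z_n]/(G)$, in the basis $1, z_n, \dots, z_n^{d-1}$ over $k[z_0,\dots,z_{n-1}]$, is the companion matrix $C$ of $T^d + p_1 T^{d-1} + \cdots + p_d$; thus $z_n I - C$ is a size-$d$ determinantal representation of $G$, but its entries have degrees running from $0$ up to $d$ rather than all being $1$, so it is not yet of the form $\sum_i z_i A_i$.

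The step I expect to be the main obstacle is the \emph{linearization}: enlarging this companion matrix to a genuinely linear matrix $M = \sum_i z_i A_i$ of size some multiple of $d$, by splicing into it the linear matrix factorizations of the coefficient forms $p_1, \dots, p_d$ supplied by the induction --- each $p_j$ involves one fewer variable, and for $j < d$ also has strictly smaller degree --- so that each non-linear entry is replaced by a block of linear forms. One then has to check that after this surgery $M$ still satisfies the twisted Clifford-type relations that force $M^d = G \cdot I$ (or at least pairs with a matrix of forms of degree $d-1$ to form a matrix factorization of $G$), and that $\det M$ remains a unit times a power of $G$. Arranging this enlargement so that the defining identities survive it is where essentially all the difficulty lies; granted this, the reduction above completes the proof.
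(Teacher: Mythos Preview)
The paper does not prove Theorem~\ref{ulrichhyp}; it is quoted from \cite{BHS} without argument, the only commentary being the preceding remark that the original proof ``makes serious use of generalized Clifford algebras.'' So there is nothing in the paper itself to compare against beyond that hint, and your first paragraph is precisely in that spirit: the reduction from a finite-dimensional representation of $C_G$ to an Ulrich bundle on $X=V(G)$ via the linear resolution of Proposition~\ref{ulrichacm}(iii) is correct and cleanly done.

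Where your proposal departs from the actual Backelin--Herzog--Sanders construction is in how the representation is built. They do not induct on the number of variables through a companion matrix. Rather, they write $G=\sum_{j=1}^{N}\ell_{j,1}\cdots\ell_{j,d}$ as a sum of products of linear forms; for a single product the cyclic $d\times d$ matrix with the $\ell_{j,i}$ just off the diagonal already satisfies $M^{d}=(\ell_{j,1}\cdots\ell_{j,d})\,I_{d}$, and summands are then combined by a tensor-product trick using the size-$d$ shift and clock matrices $P,D$ (with $PD=\omega DP$ for a primitive $d$-th root of unity $\omega$) so that the two pieces $\omega$-commute, whence $(A+B)^{d}=A^{d}+B^{d}$ by the $q$-binomial identity. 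The resulting representation has dimension $d^{N}$, which is exactly why the paper remarks, just after the theorem, that the rank of the Ulrich bundle produced is ``exponential in the number of monomials required to express the hypersurface $X$.''

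The gap you flag in your companion-matrix scheme is real and, as far as I can see, not easy to close along those lines: having linear matrix factorizations of the coefficients $p_{1},\dots,p_{d}$ gives no evident mechanism for replacing the degree-$j$ entries of the companion matrix by linear blocks while preserving $M^{d}=G\cdot I$, or even a matrix factorization of $G$ itself rather than of some auxiliary polynomial. The BHS route sidesteps this entirely by never passing through a non-linear determinantal representation in the first place.
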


Combining this with Proposition \ref{irrexist}, we have

\begin{cor}
If $f=f(x_1, \cdots ,x_n)$ is a nondegenerate homogeneous form, then the generalized Clifford algebra $C_f$ admits an irreducible representation. \hfill \qedsymbol
\end{cor}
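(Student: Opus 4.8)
The plan is simply to assemble the ingredients that have already been put in place. First I would observe that since $f$ is nondegenerate, the hypersurface $X_f \subseteq \mathbb{P}^n$ cut out by $w^d - f(x_1,\ldots,x_n)$ is smooth by definition, so Theorem \ref{ulrichhyp} (Backelin--Herzog--Sanders) applies and produces an Ulrich bundle $\mathcal{E}$ on $X_f$, of some rank $r \geq 1$.

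Next, by van den Bergh's correspondence (Proposition \ref{vdbcor}), this Ulrich bundle $\mathcal{E}$ is the geometric avatar of a $dr$-dimensional representation $\phi$ of $C_f$; in particular $C_f$ admits at least one representation. Finally I would invoke Proposition \ref{irrexist}, which says precisely that once $C_f$ has a representation it has an irreducible one: either $\mathcal{E}$ is already stable, in which case $\phi$ is irreducible by Proposition \ref{lem-irred-stable}, or $\mathcal{E}$ is strictly semistable, in which case the stable Ulrich subbundle $\mathcal{F} \subseteq \mathcal{E}$ furnished by Lemma \ref{cliffdestab} corresponds (via Proposition \ref{lem-irred-stable} again) to an irreducible subrepresentation of $\phi$. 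Either way, $C_f$ admits an irreducible representation.

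The hard part has, in effect, already been done: the real content is concentrated in Theorem \ref{ulrichhyp} (whose proof, as remarked, itself leans on generalized Clifford algebras) and in the chain of results Propositions \ref{extcliff}, \ref{lem-irred-stable}, \ref{irrexist} together with Lemma \ref{cliffdestab}. The only thing I would need to check explicitly here is the harmless bookkeeping point that the nondegeneracy hypothesis on $f$ is exactly the smoothness hypothesis required to invoke Theorem \ref{ulrichhyp} for $X_f$; everything else is immediate from the cited statements, so I expect no genuine obstacle in this final step.
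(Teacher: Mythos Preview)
Your proposal is correct and follows exactly the paper's approach: the paper simply states that the corollary follows by combining Theorem \ref{ulrichhyp} with Proposition \ref{irrexist}, and your argument just unpacks this (with the intermediate step via Proposition \ref{vdbcor} made explicit and the internal logic of Proposition \ref{irrexist} recalled).
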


This does not give a good bound for the dimension of an irreducible representation of $C_f,$ since the rank of the Ulrich bundles guaranteed by \ref{ulrichhyp} is exponential in the number of monomials required to express the hypersurface $X.$

\section{The Case of Ternary Cubic Forms}

In the section, $X$ denotes a smooth cubic surface in $\mathbb{P}^3$ and $f=f(x_1,x_2,x_3)$ denotes a nondegenerate ternary cubic form.

We begin with a characterization of Ulrich line bundles on cubic surfaces.

\begin{prop}
\label{twisted}
Let $\mathcal{L}$ be a line bundle on $X$. Then $\mathcal{L}$ is an Ulrich line bundle if and only if $\mathcal{L} \cong \mathcal{O}_{X}(T)$, where $T$ is the class of a twisted cubic.
\end{prop}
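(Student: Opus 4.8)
The plan is to use the characterization of Ulrich bundles via minimal free resolutions (Proposition \ref{ulrichacm}(iii)) together with the standard description of the Picard group of a cubic surface. By Proposition \ref{ulrichacm}, a line bundle $\mathcal{L}$ on $X$ is Ulrich if and only if $\mathcal{L}$ is ACM with Hilbert polynomial $3\binom{t+2}{2}$. The ACM condition for line bundles on a cubic surface is classical (cf. the work on ACM bundles on cubic surfaces, or Section 2 of \cite{CH}); alternatively, one may argue directly. I would first translate the Ulrich conditions into numerical data: if $\mathcal{L} = \mathcal{O}_X(D)$, then from $\pi_*\mathcal{L} \cong \mathcal{O}_{\mathbb{P}^2}^3$ and the fact that $\pi$ is a finite flat degree-$3$ map, one reads off $\chi(\mathcal{L}(-1)) = 0$ and $\chi(\mathcal{L}) = 3$, i.e. the first two values of the Hilbert polynomial vanish appropriately; combined with Riemann-Roch on $X$ (using $K_X = \mathcal{O}_X(-1)$) this pins down $D^2$ and $D \cdot H$, where $H$ is the hyperplane class. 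A short computation with Riemann-Roch, $\chi(\mathcal{O}_X(D)) = 1 + \tfrac{1}{2}D\cdot(D - K_X)$, should force $D \cdot H = 3$ and $D^2 = 1$.

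Next I would identify which divisor classes satisfy $D \cdot H = 3$, $D^2 = 1$, are ACM (equivalently $H^1(X, \mathcal{O}_X(D)) = 0$, given the other vanishings follow from the resolution), and have the correct $h^0$. Writing $X$ as $\mathbb{P}^2$ blown up at six general points with exceptional curves $E_1, \dots, E_6$ and $H = 3\ell - \sum E_i$, a class $D = a\ell - \sum b_i E_i$ with $D \cdot H = 3$ and $D^2 = 1$ is, up to the action of the Weyl group $W(E_6)$ on the configuration, the class of a twisted cubic curve on $X$ — that is, a rational curve $T$ with $T \cdot H = 3$ and $T^2 = 1$ (such curves span $\mathbb{P}^3$ as genuine twisted cubics, by adjunction $p_a(T) = 0$). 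There are $72$ such classes, matching Corollary \ref{3dimreps}. I would check that each such $\mathcal{O}_X(T)$ is in fact ACM (effectivity plus the vanishing $h^1 = 0$, which for these classes follows from a direct cohomology computation on the blow-up, or from Serre duality $h^1(\mathcal{O}_X(T)) = h^1(\mathcal{O}_X(K_X - T)) = h^1(\mathcal{O}_X(-H - T))$ and noting $-H - T$ has negative intersection with $H$ so no sections, while $h^2(\mathcal{O}_X(-H-T)) = h^0(\mathcal{O}_X(H+T)) $ accounts for $\chi$), and that $h^0(\mathcal{O}_X(T)) = 3$, giving the globally generated rank-one sheaf with $3$ sections expected from Corollary \ref{ggsect}. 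Conversely, any Ulrich line bundle has the numerical invariants of such a $T$ and is effective (it is globally generated with sections), so $\mathcal{L} \cong \mathcal{O}_X(T)$ for $T$ a twisted cubic.

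The main obstacle I anticipate is the bookkeeping on the Picard lattice: showing that the numerical conditions $D \cdot H = 3$, $D^2 = 1$ together with ACM-ness and effectivity cut out exactly the twisted-cubic classes and nothing spurious (for instance ruling out non-reduced or reducible representatives, or classes that are numerically right but not effective). This is where the geometry of the $27$ lines and the $W(E_6)$-symmetry enters, and where care is needed to match the count of $72$; the cohomological vanishings themselves are routine once the class is fixed. An alternative, perhaps cleaner, route for one direction is to exhibit the resolution directly: a twisted cubic $T \subset \mathbb{P}^3$ has ideal sheaf resolved by $0 \to \mathcal{O}_{\mathbb{P}^3}(-3)^2 \to \mathcal{O}_{\mathbb{P}^3}(-2)^3 \to \mathcal{I}_T \to 0$, and restricting/twisting appropriately against the cubic $X$ produces the resolution $0 \to \mathcal{O}_{\mathbb{P}^3}(-1)^3 \to \mathcal{O}_{\mathbb{P}^3}^3 \to \mathcal{O}_X(T) \to 0$ demanded by Proposition \ref{ulrichacm}(iii), proving $\mathcal{O}_X(T)$ is Ulrich without any case analysis.
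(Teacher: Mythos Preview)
Your proposal is correct, and for the backward direction your ``alternative, perhaps cleaner route'' via the Eagon--Northcott resolution of $\mathcal{I}_{T|\mathbb{P}^3}$ is exactly what the paper does: it uses the sequences
\[
0 \to \mathcal{O}_{\mathbb{P}^3}(-3) \to \mathcal{I}_{T|\mathbb{P}^3} \to \mathcal{O}_X(-T) \to 0,
\qquad
0 \to \mathcal{O}_{\mathbb{P}^3}(-3)^2 \to \mathcal{O}_{\mathbb{P}^3}(-2)^3 \to \mathcal{I}_{T|\mathbb{P}^3} \to 0
\]
to deduce that $\mathcal{O}_X(-T)$ (hence $\mathcal{O}_X(T)$ by Serre duality) is ACM, and then reads off the Hilbert polynomial from Riemann--Roch.

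For the forward direction the paper takes a shorter, more geometric route than your lattice classification. Rather than pin down the class of $D$ inside $\textnormal{Pic}(X)$ via $W(E_6)$, it simply uses global generation (Corollary~\ref{ggsect}) to choose a smooth curve $T \in |\mathcal{L}|$, then computes $T^2 = 1$ from $h^0(\mathcal{L}) = 3$ via Riemann--Roch and $g(T) = 0$ from adjunction; a smooth rational curve of degree $3$ in $\mathbb{P}^3$ is a twisted cubic, and the argument is complete. This sidesteps entirely the ``bookkeeping on the Picard lattice'' you flagged as the main obstacle. Your approach also works --- writing $D - H$ as a vector of square $-2$ in $H^{\perp} \cong E_6$ and using Weyl transitivity on roots shows directly that there are exactly $72$ such classes, all twisted cubics --- and has the virtue of making the count in Corollary~\ref{3dimreps} visible from the outset, at the cost of invoking more structure than the statement strictly requires.
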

\begin{proof}
($\Rightarrow$) Since $\mathcal{L}$ is globally generated, there exists a smooth curve $T \in |\mathcal{L}|$. We have from the adjunction formula that $c_{1}(\mathcal{L})^{2}=2g(T)+1$. Since $h^{0}(\mathcal{L})=3$, we have from Riemann-Roch that $c_{1}(\mathcal{L})^{2}=1$. Applying the adjunction formula to $T$ then shows that $g(T)=0$, so $T$ is necessarily a twisted cubic.

($\Leftarrow$) Let $T$ be a twisted cubic on $X$. We need to show that $\mathcal{O}_{X}(T)$ is ACM and has Hilbert polynomial $3\binom{t+2}{2}$. To show that $\mathcal{O}_{X}(T)$ is ACM, it suffices by Serre duality to show that $\mathcal{O}_{X}(-T)$ is ACM. If $\mathcal{I}_{T|\mathbb{P}^{3}}$ is the ideal sheaf of $T$ in $\mathbb{P}^{3},$ we have the exact sequences
\begin{equation}
0 \rightarrow \mathcal{O}_{\mathbb{P}^{3}}(-3) \rightarrow \mathcal{I}_{T|\mathbb{P}^{3}} \rightarrow \mathcal{O}_{X}(-T) \rightarrow 0
\end{equation}
\begin{equation}
\label{cubres}
0 \rightarrow \mathcal{O}_{\mathbb{P}^{3}}(-3)^{2} \rightarrow \mathcal{O}_{\mathbb{P}^{3}}(-2)^{3} \rightarrow \mathcal{I}_{T|\mathbb{P}^{3}} \rightarrow 0
\end{equation}
(The second of these is the well-known Eagon-Northcott resolution of $\mathcal{I}_{T|\mathbb{P}^{3}}.$)  Twisting both by $t \in \mathbb{Z}$ and taking cohomology, we have that $H^{1}(\mathcal{I}_{T|\mathbb{P}^{3}}(t)) \cong H^{1}(\mathcal{O}_{X}(-T+tH))$ and $h^{1}(\mathcal{I}_{T|\mathbb{P}^{3}}(t))=0$ for all $t \in \mathbb{Z}$.  Therefore both $\mathcal{O}_{X}(-T)$ and $\mathcal{O}_{X}(T)$ are ACM.

The fact that $\mathcal{O}_{X}(T)$ has Hilbert polynomial $3\binom{t+2}{2}$ follows immediately from computing $h^{0}(\mathcal{O}_{X}(T+tH))$ for $t >> 0$ using Riemann-Roch for surfaces.

\end{proof}

Since it is well-known (e.g. Section 5.4, \cite{Har}) that there are 72 linear equivalence classes of twisted cubics on $X$, we have 72 isomorphism classes of Ulrich line bundles on $X,$ so the following is immediate from Proposition \ref{vdbcor}.

\begin{cor}
\label{3dimreps}
There are exactly 72 equivalence classes of irreducible 3-dimensional representations of $X_f.$ \hfill \qedsymbol
\end{cor}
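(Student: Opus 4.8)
\textit{Proof proposal.} The plan is to read off the count from van den Bergh's correspondence together with Proposition \ref{twisted} and the classical enumeration of twisted cubics on a cubic surface. Since $d=3$, Proposition \ref{divbyd} forces every $3$-dimensional representation of $C_f$ to have $r=1$, so Proposition \ref{vdbcor} gives a bijection between equivalence classes of $3$-dimensional representations of $C_f$ and isomorphism classes of rank-$1$ Ulrich bundles on $X$, that is, Ulrich line bundles.

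Next I would note that a line bundle is automatically stable: a rank-$1$ torsion-free sheaf has no subsheaf $\mathcal{F}$ with $0 < \textnormal{rank}(\mathcal{F}) < 1$, so the stability inequality holds vacuously. Hence by Proposition \ref{lem-irred-stable} the representation attached to an Ulrich line bundle is irreducible; and conversely any irreducible $3$-dimensional representation corresponds to a stable Ulrich bundle, which, being of rank $1$, is an Ulrich line bundle. (In particular every $3$-dimensional representation of $C_f$ is irreducible.) So the bijection above restricts to a bijection between equivalence classes of irreducible $3$-dimensional representations of $C_f$ and isomorphism classes of Ulrich line bundles on $X$.

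Finally, by Proposition \ref{twisted} the Ulrich line bundles on $X$ are exactly the line bundles $\mathcal{O}_X(T)$ with $T$ the class of a twisted cubic, and two of these are isomorphic precisely when the corresponding twisted cubics are linearly equivalent; so isomorphism classes of Ulrich line bundles are in bijection with linear equivalence classes of twisted cubics on $X$. It is classical (realizing $X$ as $\mathbb{P}^2$ blown up at six points in general position and enumerating the divisor classes $D$ with $D^2 = 1$ and $D \cdot (-K_X) = 3$; see Section 5.4 of \cite{Har}) that there are exactly $72$ such classes. Chaining the three bijections yields the asserted count of $72$. There is no serious obstacle here, since each step invokes a result already in hand; the only points needing a word of justification are the vacuous stability of line bundles (so that \emph{all} $72$ Ulrich line bundles, not merely some, give irreducible representations) and the identification of isomorphism of line bundles with linear equivalence of the associated divisor classes.
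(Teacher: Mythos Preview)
Your proof is correct and follows the same route as the paper: combine Proposition~\ref{twisted} with the classical count of $72$ twisted-cubic classes and Proposition~\ref{vdbcor}. You are in fact more careful than the paper in spelling out, via the vacuous stability of line bundles and Proposition~\ref{lem-irred-stable}, why all $3$-dimensional representations are irreducible; the paper leaves this implicit.
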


We now come to the classification of Ulrich bundles on $X$ of rank $r \geq 2$.  The following results are Theorem 3.9 and Theorem 1.1 in \cite{CH}, respectively.
\begin{thm}\label{thm-sumofcubics}
Let D be a divisor on X and let $r \geq 2$ be an integer. Then the following are equivalent:
\begin{itemize}
\item[(i)]{$D$ is linearly equivalent to a sum of twisted cubic curves $\sum_{i=1}^{r}T_i$.}
\item[(ii)]{There exists an Ulrich bundle $\mathcal{E}$ of rank $r$ with first Chern class equal to $D$.}
\item[(iii)]{$\textnormal{deg }D=3r$ and $0 \leq D.L \leq 2r$ for all lines $L$ in $X$. \hfill \qedsymbol}
\end{itemize}
\end{thm}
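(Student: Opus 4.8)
The plan is to prove the cycle of implications $(i)\Rightarrow(ii)\Rightarrow(iii)\Rightarrow(i)$, with the last one carrying essentially all of the content. The implication $(i)\Rightarrow(ii)$ is immediate: by Proposition \ref{twisted} each $\mathcal{O}_X(T_i)$ is an Ulrich line bundle, and iterating Proposition \ref{extcliff} on split short exact sequences shows that $\bigoplus_{i=1}^{r}\mathcal{O}_X(T_i)$ is an Ulrich bundle of rank $r$; its first Chern class is $\sum_{i=1}^{r}T_i = D$.

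For $(ii)\Rightarrow(iii)$, let $\mathcal{E}$ be an Ulrich bundle of rank $r$ with $c_1(\mathcal{E})=D$. Since $\pi_{\ast}\mathcal{E}\cong\mathcal{O}_{\mathbb{P}^2}^{3r}$ and cohomology is preserved under $\pi$, we get $\chi(\mathcal{E})=3r$ and $\chi(\mathcal{E}(-H))=\chi\bigl(\mathcal{O}_{\mathbb{P}^2}(-1)^{3r}\bigr)=0$; a Riemann--Roch computation on the surface $X$ turns $\chi(\mathcal{E})-\chi(\mathcal{E}(-H))$ into $D.H$, so $\deg D = 3r$. For a line $L\subset X$, the bundle $\mathcal{E}$ is globally generated (Corollary \ref{ggsect}), hence so is $\mathcal{E}|_L\cong\bigoplus_j\mathcal{O}_{\mathbb{P}^1}(a_j)$, forcing all $a_j\ge 0$ and therefore $D.L=\sum_j a_j\ge 0$. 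For the upper bound I would pass to the conjugate bundle $\mathcal{E}^{\vee}(2H)$: using $\omega_X\cong\mathcal{O}_X(-H)$, Serre duality converts the cohomological characterization of the Ulrich property ($H^{\bullet}(\mathcal{E}(-1))=H^{\bullet}(\mathcal{E}(-2))=0$) for $\mathcal{E}$ into the same vanishing for $\mathcal{E}^{\vee}(2H)$, so the latter is again Ulrich of rank $r$, now with $c_1=2rH-D$. Applying the previous paragraph's argument to $\mathcal{E}^{\vee}(2H)$ gives $(2rH-D).L\ge 0$, i.e. $D.L\le 2r$.

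The real work is $(iii)\Rightarrow(i)$, which is a statement purely about the Picard lattice of $X$: the submonoid of $\mathrm{Pic}(X)$ generated by the $72$ twisted-cubic classes is exactly cut out, among classes with degree divisible by $3$, by the inequalities $0\le D.L\le\tfrac{2}{3}(D.H)$ over the $27$ lines. I would prove this by induction on $r$. For $r=1$ one checks that a class with $D.H=3$ and $0\le D.L\le 2$ for every line necessarily has $D^2=1$ (a finite verification, since the $27$ lines span $\mathrm{Pic}(X)$), and $D.H=3$ together with $D^2=1$ characterizes $D$ as one of the twisted-cubic classes listed in \cite{Har}. For $r\ge 2$, the inductive step is to peel off a single twisted cubic: I claim one can find a twisted-cubic class $T$ with
\[
\max(0,\,D.L-2(r-1))\ \le\ T.L\ \le\ \min(2,\,D.L)\qquad\text{for every line }L\subset X.
\]
Granting this, $D-T$ has degree $3(r-1)$ and satisfies $0\le(D-T).L\le 2(r-1)$ for all $L$, so the inductive hypothesis expresses $D-T$ as a sum of $r-1$ twisted-cubic classes, and hence $D=T+\sum_{i=1}^{r-1}T_i$.

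Producing the class $T$ is the main obstacle. I expect to handle it by exploiting the symmetry of the configuration: $\mathrm{Pic}(X)$ carries a $W(E_6)$-action permuting the $27$ lines transitively and the $72$ twisted-cubic classes transitively, and all the constraints in $(iii)$ are invariant under this action. Working in the standard model $\mathrm{Pic}(X)=\mathbb{Z}h\oplus\bigoplus_{i=1}^{6}\mathbb{Z}e_i$ with $H=3h-\sum_i e_i$ and the three orbits of lines ($e_i$; $h-e_i-e_j$; $2h-\sum_{k\ne i}e_k$), the search for $T$ becomes a bounded combinatorial optimization; one may try a candidate of the form $2h-e_i-e_j-e_k$ and choose the index set to match the required intersection numbers line by line. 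I anticipate a short case analysis organized by which lines make $D.L$ close to $0$ or to $2r$ (these are where the constraints are tight), together with a counting/averaging argument showing the feasible region for $T$ is nonempty. This last point is precisely the combinatorial core of Theorem 3.9 of \cite{CH}, which this plan would be reconstructing.
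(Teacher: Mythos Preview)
The paper does not prove this theorem; it is quoted from \cite{CH} (Theorem 3.9 there), and the only argument the paper supplies is the Remark that $(i)\Rightarrow(ii)$ follows from Propositions \ref{twisted} and \ref{extcliff}. Your $(i)\Rightarrow(ii)$ reproduces that remark exactly, and your $(ii)\Rightarrow(iii)$ is a correct self-contained argument that goes beyond what the paper offers: the degree computation via $\chi(\mathcal{E})-\chi(\mathcal{E}(-H))=D\cdot H$, the lower bound $D\cdot L\ge 0$ from global generation, and the upper bound via the Ulrich ``conjugate'' $\mathcal{E}^{\vee}(2H)$ are all valid.

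There is, however, a genuine gap in your outline of $(iii)\Rightarrow(i)$, separate from the combinatorial step you already flag as incomplete. Your induction bottoms out at $r=1$ with the claim that $D\cdot H=3$ and $0\le D\cdot L\le 2$ force $D^{2}=1$. This is false: the hyperplane class $D=H$ satisfies $D\cdot H=3$ and $D\cdot L=1$ for every line, yet $H^{2}=3$ and $H$ is not a twisted-cubic class. This is exactly why the theorem is stated only for $r\ge 2$; the implication $(iii)\Rightarrow(i)$ genuinely fails at $r=1$. Consequently your induction cannot terminate at $r=1$ using the inequalities alone. A repair is possible---either take $r=2$ as the base case and verify it directly in the $E_{6}$ lattice, or show that your peeling procedure, applied at $r=2$, always produces $D-T$ equal to a twisted-cubic class rather than $H$ (for instance, when $D=2H$ one has $2H-T$ a twisted cubic for any $T$)---but as written the base case is wrong. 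With that correction, the remaining issue is the one you already acknowledge: the existence of a suitable $T$ in the inductive step is the combinatorial heart of \cite[Theorem 3.9]{CH}, and your sketch does not yet establish it.
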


\begin{rem}
Proving the implication $(i) \Rightarrow (ii)$ is easy: if $D=\sum_{i=1}^{r} T_i$, where each $T_i$ is the class of a twisted cubic, one can take $\mathcal{E}=\oplus_{i=1}^{r} \mathcal{O}_{X}(T_i)$, which is Ulrich by Propositions \ref{extcliff} and \ref{twisted}.
\end{rem}

\begin{thm}\label{thm-exist-stable}
Let $D$ be a divisor on a nonsingular cubic surface $X \subseteq \mathbb{P}^3$, and let $r \geq 2$ be an integer. Then there exist stable Ulrich bundles $\mathcal{E}$ of rank $r$ on $X$ with $c_1(\mathcal{E}) = D$ if and only if $0 \leq D \cdot L \leq 2r$ for all lines $L$ on $X$, and $D.T \geq 2r$ for all twisted cubic curves $T$ on $X$, with one exception.

Moreover, if $D$ satisfies the conditions above, the moduli space $M^{s}_{X}(r; c_1, c_2)$ of stable vector bundles on $X$ of rank $r$, $c_1 = D$ and $c_2 = \frac{D^2-r}{2}$, is smooth and irreducible of dimension $D^2-2r^2+1$ and consists entirely of stable Ulrich vector bundles. \hfill \qedsymbol
\end{thm}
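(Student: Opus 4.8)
\medskip

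\noindent\textbf{Proof proposal for Theorem \ref{thm-exist-stable}.} The plan is to treat necessity of the three numerical conditions and then sufficiency together with the structure of $M^s_X(r;c_1,c_2)$. Throughout I would use that on a cubic surface every Ulrich bundle of rank $r$ has slope $\mu=3$ (since $c_1\cdot H=3r$); that its numerical invariants are completely determined once $c_1=D$ is fixed --- namely $\deg D=3r$ (forced by the hypotheses via Theorem \ref{thm-sumofcubics}) and $c_2=\frac{D^2-r}{2}$, the latter read off by equating the Riemann--Roch Hilbert polynomial of a rank-$r$ bundle with invariants $(D,c_2)$ to the Ulrich Hilbert polynomial $3r\binom{t+2}{2}$ of Proposition \ref{ulrichacm}; and that the Ulrich duality $\mathcal{E}\mapsto\mathcal{E}^\vee(2)$ preserves the Ulrich property on $X$ (for line bundles it is $\mathcal{O}_X(T)\mapsto\mathcal{O}_X(2H-T)$, and $\mathcal{E}^\vee(2)$ has $c_1=2rH-D$ and the same shape of invariants as $\mathcal{E}$). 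The other standing inputs are Serre duality on $X$ (with $\omega_X=\mathcal{O}_X(-1)$), the characterization that a bundle $\mathcal{E}$ on $X$ is Ulrich iff $H^\bullet(\mathcal{E}(-1))=H^\bullet(\mathcal{E}(-2))=0$ (equivalent to Proposition \ref{ulrichacm}(iii); cf. \cite{ESW}), Proposition \ref{cliffgiesstab}, and Corollary \ref{assgrad}.

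\emph{Necessity.} Let $\mathcal{E}$ be a stable Ulrich bundle of rank $r$ with $c_1(\mathcal{E})=D$. For any line $L\subseteq X$, $\mathcal{E}|_L$ is globally generated (Corollary \ref{ggsect}), hence a sum of line bundles of nonnegative degree on $L\cong\mathbb{P}^1$, so $D\cdot L\geq 0$; applying this to the Ulrich bundle $\mathcal{E}^\vee(2)$ gives $2rH\cdot L-D\cdot L\geq 0$, i.e. $D\cdot L\leq 2r$. For the twisted-cubic bound, suppose $D\cdot T\leq 2r-1$ for some twisted cubic $T$. Riemann--Roch gives $\chi(\mathcal{E}(-T))=2r-D\cdot T\geq 1$, and $H^2(\mathcal{E}(-T))\cong\mathrm{Hom}(\mathcal{E},\mathcal{O}_X(T-1))^\vee$ vanishes because $\mathrm{Hom}(\mathcal{E},\mathcal{O}_X(T-1))=H^0(\mathcal{E}^\vee(2)\otimes\mathcal{O}_X(T-3H))$ and the semistable bundle $\mathcal{E}^\vee(2)\otimes\mathcal{O}_X(T-3H)$ has negative slope. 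Hence $H^0(\mathcal{E}(-T))\neq 0$, giving a nonzero map $\mathcal{O}_X(T)\to\mathcal{E}$; since $\mathcal{O}_X(T)$ is Ulrich it has the same reduced Hilbert polynomial as $\mathcal{E}$, and being of rank one any nonzero such map is injective, contradicting stability of $\mathcal{E}$. Thus $D\cdot T\geq 2r$ for every twisted cubic $T$.

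\emph{Sufficiency --- the formal part.} Assume $D$ satisfies the two conditions. By Theorem \ref{thm-sumofcubics} write $D=\sum_{i=1}^{r}T_i$ and set $\mathcal{E}_0=\bigoplus_{i=1}^{r}\mathcal{O}_X(T_i)$; by Propositions \ref{extcliff} and \ref{twisted} this is Ulrich and by Proposition \ref{cliffgiesstab} semistable, so the moduli of semistable sheaves with invariants $(r,D,\frac{D^2-r}{2})$ is nonempty. Three of the four claimed properties of $M^s_X(r;c_1,c_2)$ are then formal. First, \emph{every stable bundle $\mathcal{E}$ with these invariants is Ulrich}: its Hilbert polynomial is $3r\binom{t+2}{2}$ by Riemann--Roch, and $H^0(\mathcal{E}(-1))$, $H^0(\mathcal{E}(-2))$, $H^2(\mathcal{E}(-1))=H^0(\mathcal{E}^\vee)^\vee$ and $H^2(\mathcal{E}(-2))=H^0(\mathcal{E}^\vee(1))^\vee$ all vanish --- a nonzero section or cosection would produce a rank-one sub- or quotient-sheaf whose slope is incompatible with $\mu(\mathcal{E})=3$, or (in the borderline inclusion $\mathcal{O}_X\hookrightarrow\mathcal{E}(-1)$) with the reduced Hilbert polynomials --- and then $\chi(\mathcal{E}(-1))=\chi(\mathcal{E}(-2))=0$ forces the $H^1$'s to vanish too, so $\mathcal{E}$ is Ulrich. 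Second, \emph{smoothness}: for any stable $\mathcal{E}$ here (and for $\mathcal{E}_0$), $\mathrm{Ext}^2(\mathcal{E},\mathcal{E})\cong\mathrm{Hom}(\mathcal{E},\mathcal{E}(-1))^\vee=0$ by Serre duality and (semi)stability, since $\mathcal{E}(-1)$ has slope strictly below $\mathcal{E}$, so deformations are unobstructed. Third, \emph{dimension}: since $\mathrm{hom}(\mathcal{E},\mathcal{E})=1$ and $\chi(\mathcal{E},\mathcal{E})=2r^2-D^2$ by Riemann--Roch, $\dim_{[\mathcal{E}]}M^s_X(r;c_1,c_2)=\mathrm{ext}^1(\mathcal{E},\mathcal{E})=1-\chi(\mathcal{E},\mathcal{E})=D^2-2r^2+1$.

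\emph{The hard part: a stable bundle, the exception, and irreducibility.} It remains to exhibit a \emph{stable} Ulrich bundle --- equivalently, to see $M^s_X(r;c_1,c_2)$ is nonempty --- and to prove irreducibility. For nonemptiness one shows that a general deformation of $\mathcal{E}_0$ is stable: $\mathcal{E}_0$ is polystable with unobstructed deformations ($\mathrm{Ext}^2(\mathcal{E}_0,\mathcal{E}_0)=0$ by the same slope argument), so a neighborhood of $[\mathcal{E}_0]$ in the semistable moduli space has pure dimension $D^2-2r^2+1$, and one must show its general point is stable; by Corollary \ref{assgrad} the strictly semistable locus is stratified by Jordan--H\"older type, each stratum being built from lower-rank moduli spaces fibered over extension data, and one checks (using $D\cdot T\geq 2r$, so that $\chi(\mathcal{E}(-T))=2r-D\cdot T\leq 0$ and a sub-line-bundle $\mathcal{O}_X(T)$ of the same reduced Hilbert polynomial is non-generic) that every stratum has dimension $<D^2-2r^2+1$. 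Carried out by induction on $r$ --- realizing rank-$r$ stable Ulrich bundles as general extensions of lower-rank stable Ulrich bundles by Ulrich line bundles and checking the relevant $\mathrm{Ext}^1$-dimensions --- this succeeds except for one explicit divisor class $D$ in small rank, where the general such extension is forced to remain strictly semistable; pinning down this exceptional case, and verifying it is the only one, is a direct examination of the possible $D$. Irreducibility is the step I expect to be the main obstacle: since $M^s_X(r;c_1,c_2)$ is already smooth of pure dimension $D^2-2r^2+1$, it suffices to prove connectedness, and the natural routes are either to use the description of rank-$r$ Ulrich bundles via $3r\times 3r$ matrices of linear forms (Propositions \ref{vdbcor} and \ref{ulrichacm}(iii)) and show the parameter space of such matrices yielding a bundle with $c_1=D$ is irreducible, or to show every stable Ulrich bundle specializes to $\mathcal{E}_0$ inside the semistable moduli space, e.g. by running Lemma \ref{cliffdestab} in reverse as iterated extensions of lower-rank Ulrich bundles together with a dimension count forcing the general extension to be stable. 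The smoothness and dimension assertions are, by contrast, immediate once the cohomology vanishings above are in hand; it is understanding how \emph{all} rank-$r$ stable Ulrich bundles of a given $c_1$ arise --- hence the connectedness of the moduli space --- that is genuinely delicate, and it is in this analysis that the lone exceptional case surfaces.
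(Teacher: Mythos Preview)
The paper does not prove this theorem. It is quoted verbatim as Theorem~1.1 of \cite{CH} (Casanellas--Hartshorne) and is marked with a \qedsymbol\ at the end of the statement; the surrounding text says explicitly that ``the following results are Theorem~3.9 and Theorem~1.1 in \cite{CH}, respectively.'' There is therefore nothing in the paper to compare your argument against beyond the bare citation.

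That said, your sketch is a reasonable outline of how the argument in \cite{CH} actually runs, and you have correctly located where the work lies. The necessity argument (global generation plus the Ulrich involution $\mathcal{E}\mapsto\mathcal{E}^{\vee}(2)$ for the line bounds, Riemann--Roch plus stability for the twisted-cubic bound) is essentially what Casanellas--Hartshorne do. Likewise the ``formal'' part of sufficiency---that any stable bundle with the forced numerics is automatically Ulrich, that $\mathrm{Ext}^{2}(\mathcal{E},\mathcal{E})=0$ gives smoothness, and that Riemann--Roch gives the dimension---is standard and matches their treatment. Where your proposal remains a sketch rather than a proof is exactly where you say it does: the inductive construction of a genuinely \emph{stable} Ulrich bundle (showing the strictly semistable strata have strictly smaller dimension requires a careful case analysis, not just the observation $\chi(\mathcal{E}(-T))\le 0$), the identification of the single exceptional $(r,D)$, and irreducibility of the moduli space. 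In \cite{CH} these are handled by an explicit induction on rank building stable bundles as generic extensions and by a detailed stratification/dimension count; none of that is reproduced here, and your proposal correctly flags these as the genuine content you have not supplied.
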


To see that the conditions of Theorem \ref{thm-exist-stable} are not vacuous, note that each $r \geq 1$ they are satisfied by the divisor $rH.$

\begin{cor}
\label{cubicreps}
For all $r \geq 1$ there exists a $3r-$dimensional irreducible representation of $C_f.$  Moreover, for each $r$ there are finitely many families of such representations.\hfill \qedsymbol
\end{cor}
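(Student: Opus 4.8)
The plan is to deduce existence by feeding the divisor $rH$ into the classification results of Casanellas and Hartshorne (Theorems \ref{thm-sumofcubics} and \ref{thm-exist-stable}), and to deduce finiteness from the numerical constraints those results place on the first Chern class. For the existence statement, the case $r = 1$ is already Corollary \ref{3dimreps}: each of the $72$ Ulrich line bundles $\mathcal{O}_X(T)$ is automatically stable (a line bundle has no subsheaf of intermediate rank), so by Proposition \ref{lem-irred-stable} it corresponds to an irreducible $3$-dimensional representation of $C_f$. For $r \geq 2$, I would apply Theorem \ref{thm-exist-stable} to the divisor $D = rH$: as noted immediately after that theorem, $rH$ satisfies $0 \leq rH \cdot L = r \leq 2r$ for every line $L$ on $X$ and $rH \cdot T = 3r \geq 2r$ for every twisted cubic $T$, and one checks that it is not the single exceptional class. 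Hence the moduli space $M^{s}_{X}\!\bigl(r;\, rH,\, \frac{3r^2 - r}{2}\bigr)$ is nonempty and, by the same theorem, consists entirely of stable Ulrich bundles of rank $r$. Picking any $\mathcal{E}$ in it, Proposition \ref{lem-irred-stable} shows the associated representation of $C_f$ is irreducible, and Proposition \ref{vdbcor} (applied with $d = 3$) shows it has dimension $3r$; this settles the first assertion.

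For the finiteness statement, note first that by Propositions \ref{vdbcor} and \ref{lem-irred-stable} the $3r$-dimensional irreducible representations of $C_f$ correspond bijectively to stable Ulrich bundles of rank $r$ on $X$, and each such bundle lies in one of the moduli spaces $M^{s}_{X}(r; c_1, c_2)$; by Theorem \ref{thm-exist-stable} these are irreducible, so each counts as a single family, and $c_2 = \frac{c_1^2 - r}{2}$ is determined by $c_1$ (the Hilbert polynomial of an Ulrich bundle being fixed). It therefore suffices to bound the number of possible first Chern classes $D$. For $r = 1$ these are the $72$ twisted-cubic classes. For $r \geq 2$, the implication (ii) $\Rightarrow$ (iii) of Theorem \ref{thm-sumofcubics} shows that the first Chern class of any rank-$r$ Ulrich bundle satisfies $0 \leq D \cdot L \leq 2r$ for all $27$ lines $L$ on $X$; since the $27$ lines span $\mathrm{Pic}(X) \otimes \mathbb{Q}$, this finite system of inequalities cuts out a bounded subset of the lattice $\mathrm{Pic}(X)$, which therefore contains only finitely many classes. (Explicitly, in a standard basis $\{\ell, E_1, \ldots, E_6\}$ of $\mathrm{Pic}(X)$ arising from a realization of $X$ as $\mathbb{P}^2$ blown up at $6$ points, writing $D = a\ell - \sum_i b_i E_i$ and applying the inequalities to the lines $E_i$ and $\ell - E_i - E_j$ forces $0 \leq b_i \leq 2r$ and $0 \leq a \leq 6r$.) Hence there are finitely many admissible pairs $(c_1, c_2)$, and so finitely many families; the single exception in Theorem \ref{thm-exist-stable} changes at most one of these moduli spaces, which is in any case of finite type, so it does not affect finiteness.

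The existence half is little more than bookkeeping on top of the cited theorems, so the only point requiring a genuine (though routine) argument is the finiteness of the set of admissible first Chern classes. The two facts to check there are that $rH$ is not the single exceptional class of Theorem \ref{thm-exist-stable} (immediate from the explicit description of the exception) and that the $27$ lines span $\mathrm{Pic}(X) \otimes \mathbb{Q}$, so that the bounded system of inequalities really does isolate only finitely many lattice points; both are standard facts about smooth cubic surfaces, and I do not anticipate any deeper obstacle.
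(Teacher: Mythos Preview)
Your existence argument is essentially identical to the paper's: both specialize Theorem~\ref{thm-exist-stable} to $D=rH$ and invoke Proposition~\ref{lem-irred-stable} to translate stable Ulrich bundles into irreducible representations.

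For finiteness you take a genuinely different route. The paper uses the implication (ii)$\Rightarrow$(i) of Theorem~\ref{thm-sumofcubics}: the first Chern class of a rank-$r$ Ulrich bundle is a sum of $r$ twisted-cubic classes, and since there are only $72$ such classes, there are only finitely many such sums. You instead use (ii)$\Rightarrow$(iii), extracting the inequalities $0\le D\cdot L\le 2r$ and arguing that the $27$ lines span $\mathrm{Pic}(X)\otimes\mathbb{Q}$, so these inequalities cut out a bounded (hence finite) set of lattice points. Both arguments are correct; the paper's is shorter and more combinatorial, while yours is more geometric and would adapt to settings where one has intersection-theoretic bounds but no explicit decomposition into generators. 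One minor point: your remark about the exceptional case in Theorem~\ref{thm-exist-stable} is slightly muddled---the exception only \emph{removes} a potential family rather than adding anything, so it is irrelevant to an upper bound on the number of families and needs no ``finite type'' justification.
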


\begin{proof}
The first statement follows immediately from setting $D=rH$ in Theorem \ref{thm-exist-stable}.  For the second statement, note that each family of stable Ulrich bundles is completely determined by the first Chern class $D,$ and that $D$ is a sum of (not necessarily distinct) twisted cubics by Theorem \ref{thm-sumofcubics}.  There are 72 classes of twisted cubics and each twisted cubic can appear in $D$ at most $r$ times, so the statement follows.
\end{proof}
Determining which family a given irreducible representation belongs to is an interesting problem that will be explored in future work.


\end{document}